\newtheorem{theorem}{\bf Theorem}[section]
\newtheorem{lem}{\bf Lemma}[section]
\newtheorem{rem}{\bf Remark}[section]
\newcommand{\R}{\mathbb{R}}
\renewcommand\@biblabel[1]{#1.} \makeatother
\renewcommand\@biblabel[1]{#1.} \makeatother
\begin{document}

\title{Poiseuille Flow of Carreau-Yasuda Fluid at Variable Pressure Gradient} 
\author[N. Kutev]{N. Kutev}
\address{Nikolay Kutev, Institute of Mathematics and Informatics, Bulgarian Academy of Sciences, Acad. G. Bontchev str., bl. 9, 1113 Sofia, Bulgaria}
\email{kutev@math.bas.bg}

\author[S. Tabakova]{S. Tabakova*}
\address{Sonia Tabakova, Institute of Mechanics, Bulgarian Academy of Sciences, Acad. G. Bontchev str., bl. 4, 1113 Sofia, Bulgaria}
\email{stabakova@gmail.com}
\thanks{*corresponding author}

\subjclass[2010]{76A05,	35J66,  35Q35 }

\keywords{Carreau-Yasuda fluid, Poiseuille flow, variable pressure gradient, classical solution, negative power index}

\date{}

\begin{abstract}The unsteady Poiseuille flow of Carreau-Yasuda fluid in a pipe, caused by a variable pressure gradient, is studied theoretically. In a particular case, the steady flow is considered separately. It is proved that at some values of the viscosity model parameters, the problem has a generalized solution, while at others -  classical solution. For the latter, a necessary and sufficient condition is found, which depends on the maximum pressure gradient and on the Carreau-Yasuda model parameters.
\end{abstract}

\maketitle

\section{Introduction}\label{sec1}
Only a small group of fluids refer to the so-called Newtonian fluids, possessing constant viscosity. All other fluids are known as non-Newtonian fluids, whose properties are complicated and usually described by different nonlinear rheological models for shear stress or viscosity \cite{Bird:1987}, \cite{Ewoldt:2022}. Their rheological complexity permits them to be used in a large range of applications,
such as biology, energy, additive manufacturing, etc. 

Usually, the viscosity (or stress) is described by nonlinear models as a function of shear rate \cite{Bird:1987}, such as the power law model, Carreau model, Carreau-Yasuda model and others. These fluids can be shear-thickening, when their viscosity increases with the shear rate (power index $n>1$) or shear thinning in the decreasing case (power index $n<1$). 

However, the complex shear-thinning fluids are often unstable at high shear rates, which results in a negative slope of stress \cite{Divoux:2016}-\cite{Kotsilkova:2023}, usually expressed by $n\lesssim 0$. Then, for the Poiseuille flow in pipes, the pressure axial gradient is no more constant and becomes a function of the radial coordinate, which means that the flow can not be described by the well-known  Weissenberg–Rabinowitsch–Mooney theory \cite{Kim:2018}. 

This paper is a prolongation of our previous works \cite{Kutev:2021}-\cite{Kutev:2023}, which treat the general  flow problems of shear-thinning fluid flow in a pipe. The first two papers concern the unsteady flow case, while the third - the steady case. In these works, the Carreau-Yasuda model is used for the fluid viscosity of the flow, caused by a constant pressure gradient in the axial direction, which is time-dependent or steady. As a result, the flow is governed by a single non-linear PDE of parabolic type in the unsteady case or elliptic type in the steady case for the axial velocity. In \cite{Kutev:2022} it was proved that the unsteady problem becomes uniformly parabolic, nonuniformly parabolic, degenerate parabolic  or backward parabolic at different values of $n$. In \cite{Kutev:2023} the problem has a classical solution, for which a necessary and sufficient condition is found depending on the model parameters. 

In the present paper, the pressure gradient is considered as a function of the radial coordinate $r$ in the pipe flow of the Carreau-Yasuda fluid with arbitrary power index $n$. The existence of a classical solution will be proved separately for the steady and unsteady cases. 

The dimensionless velocity equation of Carreau-Yasuda flow in an infinite circular pipe at axial pressure gradient $b(Y)$ in cylindrical coordinates is given by \cite{Kutev:2022}, \cite{Kutev:2023}: 
\begin{eqnarray}\label{1.1}
8\beta^2U_T - \frac{1}{Y}\frac{\partial}{\partial Y} \left [\left(1+\kappa^\alpha \mid U_Y \mid  ^\alpha\right)^{\frac{n-1}{\alpha}} Y U_{Y}\right]=b(Y),
\\ \nonumber
\textrm{in} \quad Q=\{(T,Y); \quad T>0; \quad Y\in(0, R)\},
\end{eqnarray}
\begin{equation}\label{1.2}
U_Y(T,0) = U(T,R) = 0 \quad \textrm{for} \quad T \geq 0,  \quad
U(0,Y)=\Psi(Y) \quad \textrm{for} \quad  Y\in[0, R],
\end{equation}
where $U=U(T,Y)$ is the dimensionless axial velocity, $T$ - dimensionless time, $Y$ - dimensionless radial coorinate, $R$ - dimensionless radius, $\displaystyle{8\beta^2= Re St}$ with $Re$ and $St$ as Reynolds and Strouhal numbers, $\kappa$ - Carreau number (Weissenberg number), $\alpha$ and $n$ are empirically determined. The function $\Psi(Y)\in C^4([0,R])$ satisfies the compatibility conditions:
\begin{equation}\label{1.3}
\Psi'(0) = \Psi(R) = 0,\quad \Psi'(R) = 0, \quad \Psi''(R) +b(R) = 0
  \end{equation}
\begin{eqnarray}  \label{1.4}
\textrm{and} \quad b(Y)=C^2 ([0,R]), \quad 0\le b(Y) \le b_0 \quad \textrm{for}\quad Y\in [0, R],
\\ \nonumber
 b_0=const.,\quad b(Y)\not\equiv\ 0 \quad \textrm{for} \quad Y\in(0, R).
   \end{eqnarray}
   
In non-divergence form eq. \eqref{1.1} becomes :
\begin{equation}\label{1.5}
P_0(U)=8\beta^2U_T - \Phi(\mid U_Y \mid)U_{YY} -\frac{1}{Y}\left(1+\kappa^\alpha \mid U_Y \mid ^\alpha\right)^{\frac{n-1}{\alpha}}U_{Y} = b(Y),
\end{equation}
where
\begin{eqnarray}\label{1.6}
\Phi(\eta) = (1-n)\left(1+\kappa^\alpha\eta^\alpha\right)^{\frac{n-1-\alpha}{\alpha}} + n\left(1+\kappa^\alpha\eta^\alpha\right)^{\frac{n-1}{\alpha}}
\\ \nonumber
=\left(1+n\kappa^\alpha\eta^\alpha\right)\left(1+\kappa^\alpha\eta^\alpha\right)^{\frac{n-1-\alpha}{\alpha}} \quad \textrm{for} \quad \eta \ge 0.
\end{eqnarray}
Since 
\begin{equation}\label{1.7}
\Phi'(\eta) = (n-1)\kappa^\alpha \left(1+\kappa^\alpha\eta^\alpha\right)^{\frac{n-1-2\alpha}{\alpha}}\left(1+\alpha+n\kappa^\alpha\eta^\alpha\right)\eta^{\alpha-1},
\end{equation}
\begin{eqnarray}\label{1.8}
\Psi(0)=1,\quad \lim_{\eta\rightarrow \infty} \Psi(\eta)=\infty, \quad \textrm{for} \quad n>1, \quad \alpha > 0, \quad \kappa \neq 0, \\ \nonumber
\lim_{\eta\rightarrow \infty} \Psi(\eta)=0, \quad \textrm{for} \quad n<1, \quad \alpha > 0, \quad \kappa \neq 0
\end{eqnarray}
it follows that for $\alpha > 0$ , $\eta \ge 0$:
\begin{eqnarray}\nonumber
\textrm{(i)} \quad \Psi(\eta) \ge 1 \quad \textrm{when} \quad n>1, \quad \kappa \neq 0;
\\ \label{1.9}
\textrm{(ii)} \quad \Psi(\eta) \equiv 1 \quad \textrm{when} \quad n=1 \quad \textrm{or} \quad \kappa = 0;
\\ \nonumber
\textrm{(iii)} \quad \Psi(\eta) \in (0,1] \quad \textrm{when} \quad n\in [0,1), \quad \kappa \neq 0.
\end{eqnarray}
When $n<0$, $\kappa\neq 0$, $\alpha>0$ and $\displaystyle{\eta_0=\kappa^{-1}\left(-\frac{1}{n}\right)^{\frac{1}{\alpha}}}$, then
\begin{align}\nonumber
\textrm{(i)} \quad &\Psi(\eta) \in [0,1] \quad  \textrm{for} \quad \eta \in [0,\eta_0] \quad \textrm{and} 
\\ \label{1.10}
 \textrm{(ii)} \quad &\Psi(\eta)< 0 \quad \textrm{for} \quad \eta>\eta_0.
\end{align}
Thus equation \eqref{1.1} becomes for $\alpha>0$, $\beta>0$:
\begin{align}\label{1.11}
\textrm{(i)} &\quad n>1, \quad \kappa\neq 0 \quad \textrm{- singular, strictly nonuniformly quasilinear parabolic one};
\\ \nonumber
\textrm{(ii)} &\quad n=1  \quad \textrm{or} \quad \kappa=0 \quad \textrm{- singular, linear parabolic one};
\\ \nonumber
\textrm{(iii)} &\quad n \in [0,1) \quad \kappa \neq 0 \quad \textrm{- singular, degenerate at infinity, quasilinear parabolic one}.
\end{align}
When $n<0$, $\alpha>0$, $\beta>0$ and $\kappa\neq 0$, the structure of \eqref{1.1} is more complicated. If
\begin{equation}\label{1.12}
0\leq \mid U_Y(T,Y)\mid < \eta_0 =\kappa^{-1}\left(-\frac{1}{n}\right)^{\frac{1}{\alpha}}
\end{equation}
then \eqref{1.1} is singular, quasilinear uniformly parabolic one, while for 
\begin{equation}\label{1.13}
\mid U_Y(T,Y)\mid \ge \eta_0 
\end{equation}
is singular, degenerate backward quasilinear parabolic one.

Further on we consider the regularized problem
\begin{align} \nonumber 
P_\varepsilon U^\varepsilon=8\beta^2U^\varepsilon_T - \frac{1}{Y+\varepsilon}\frac{\partial}{\partial Y} \left [\left(1+\kappa^\alpha \mid U^\varepsilon_Y \mid  ^\alpha\right)^{\frac{n-1}{\alpha}} (Y+\varepsilon) U^\varepsilon_{Y}\right]=b(Y) \quad \textrm{in} \quad Q \\ \label{1.14}
U^\varepsilon_Y(T,0) = U^\varepsilon(T,R) = 0 \quad \textrm{for} \quad T \geq 0,  \quad
U^\varepsilon(0,Y)=\Psi(Y) \quad \textrm{for} \quad  Y\in[0, R]
\end{align}
for every sufficiently small positive $\varepsilon \in (0,\varepsilon_0]$, where $\varepsilon_0 \ll R$.
\section{Steady Poiseuille flow of Carreau-Yasuda fluid}\label{sec2}
In this section we prove necessary and sufficient conditions for existence and uniqueness of classical solution of the stationary part of equation \eqref{1.1}. For convenience we consider the regularized problem:
\begin{align}\label{2.1}
L V^\varepsilon=\frac{1}{Y+\varepsilon}\frac{\partial}{\partial Y} \left [\left(1+\kappa^\alpha \mid V^\varepsilon_Y \mid  ^\alpha\right)^{\frac{n-1}{\alpha}} (Y+\varepsilon) V^\varepsilon_{Y}\right]=b(Y), \quad Y\in (0,R)
\\ \nonumber
V_Y^\varepsilon (0) = 0, \quad V^\varepsilon(R) = 0
\end{align}
for every $\varepsilon \in (0,\varepsilon_0]$, sufficiently small positive $\varepsilon_0 \ll R$.

The solutions of \eqref{2.1} are crucial for the gradient estimate of the solutions of \eqref{1.14} with constants independent of $\varepsilon$. If
\begin{equation}\label{2.2}
B_\varepsilon(Y)=\frac{1}{Y+\varepsilon}\int^Y_0(s+\varepsilon)b(s)ds \quad \textrm{for} \quad  Y\in[0, R]
\end{equation}
then from the l'Hopital rule for $\varepsilon=0$, we get
\begin{equation}\label{2.3}
\lim_{Y\rightarrow 0} B_0(Y)=\lim_{Y\rightarrow 0} Y b(Y)=0, \quad B_\varepsilon(0)=0 \quad \textrm{for} \quad \varepsilon \in (0,\varepsilon_0], \quad \varepsilon_0 \ll R,
\end{equation}
\begin{equation}\label{2.4}
B'_\varepsilon(Y)=b(Y)-\frac{1}{(Y+\varepsilon)^2}\int^Y_0(s+\varepsilon)b(s)ds,
\end{equation}
\begin{eqnarray} \nonumber
\lim_{Y\rightarrow 0} B'_0(Y)=b(0)-\lim_{Y\rightarrow 0}\frac{Y.b(Y)}{2Y}=\frac{1}{2}b_0(0)\\ \nonumber
B'_\varepsilon (0)=b(0), \quad \textrm{for} \quad \quad \varepsilon \in (0,\varepsilon_0],
\\ \nonumber
B''_\varepsilon(Y)=b'(Y)-\frac{1}{Y+\varepsilon}b(Y)+\frac{2}{(Y+\varepsilon)^3}\int^Y_0(s+\varepsilon)b(s)ds\\ \nonumber
\textrm{and} \quad B_\varepsilon(Y) \in C^2([0,R]) \quad \textrm{for} \quad \varepsilon \in (0,\varepsilon_0], \quad \varepsilon_0 \ll R,\\ \nonumber
B_0(Y)\in C^1([0,R])\cap C^2((0,R])
\end{eqnarray}
We also define the function
\begin{equation}\label{2.5}
F(\eta)=\left(1+\kappa^\alpha\mid\eta \mid^\alpha \right)^\frac{n-1}{\alpha} \eta \quad \textrm{for} \quad \eta \in \R.
\end{equation}
\begin{theorem} \label{thm1}
Suppose $n>0$,$\kappa\ne 0$ or $n \in \R$, $\kappa=0$ and $\alpha>0$. Then problem \eqref{2.1} has a unique classical solution $V^\varepsilon(Y)\in C^2([0,R])$
\begin{equation}\label{2.6}
V^\varepsilon(Y)=-\int_Y^R F^{-1}(B_\varepsilon(s))ds \quad \textrm{for} \quad Y\in[0,R]
\end{equation}
and the estimate
\begin{equation}\label{2.7}
0\le V_Y^\varepsilon(Y)\le F^{-1}(B_{\varepsilon_0}(Y))\le  F^{-1}\left(\frac{b_0(R+\varepsilon_0)}{2}\right)
\end{equation}
holds for every $Y\in [0,R]$ and $\varepsilon \in (0,\varepsilon_0], \quad \varepsilon_0 \ll R$.
\end{theorem}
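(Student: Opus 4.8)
The plan is to exploit the divergence form of \eqref{2.1}, which makes the equation integrable in closed form. First I would multiply \eqref{2.1} by $Y+\varepsilon$ and integrate over $(0,Y)$; since $V^\varepsilon_Y(0)=0$ the lower boundary term drops out, and dividing by $Y+\varepsilon$ shows that every classical solution must satisfy $F\bigl(V^\varepsilon_Y(Y)\bigr)=B_\varepsilon(Y)$ for all $Y\in[0,R]$, with $F$ given by \eqref{2.5} and $B_\varepsilon$ by \eqref{2.2}. This reduces the whole problem to understanding the scalar map $F$.

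Next I would show that $F$ is a $C^1$ strictly increasing bijection of $\R$ onto $\R$. When $\kappa=0$ this is trivial, since $F(\eta)=\eta$. When $\kappa\neq 0$ and $n>0$, a differentiation as in \eqref{1.6}--\eqref{1.7} gives $F'(\eta)=\Phi(|\eta|)=\bigl(1+n\kappa^\alpha|\eta|^\alpha\bigr)\bigl(1+\kappa^\alpha|\eta|^\alpha\bigr)^{\frac{n-1-\alpha}{\alpha}}$, which is strictly positive precisely because $n>0$, so $F$ is strictly increasing; and since $|F(\eta)|\sim|\kappa|^{\,n-1}|\eta|^n\to\infty$ as $|\eta|\to\infty$ (again because $n>0$), $F$ is onto. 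The inverse function theorem then gives $F^{-1}\in C^1(\R)$, odd, strictly increasing, with $F^{-1}(0)=0$. Consequently $V^\varepsilon_Y(Y)=F^{-1}\bigl(B_\varepsilon(Y)\bigr)$, and integrating from $Y$ to $R$ with $V^\varepsilon(R)=0$ forces the representation \eqref{2.6}; this settles uniqueness. For existence I would take \eqref{2.6} as a definition and verify directly: $B_\varepsilon\in C^2([0,R])$ for $\varepsilon>0$ together with $F^{-1}\in C^1(\R)$ gives $V^\varepsilon\in C^2([0,R])$ with $V^\varepsilon_Y=F^{-1}(B_\varepsilon)$, hence the boundary conditions $V^\varepsilon_Y(0)=F^{-1}(0)=0$ and $V^\varepsilon(R)=0$; and the identity $F(V^\varepsilon_Y)=B_\varepsilon$, rewritten as $(1+\kappa^\alpha|V^\varepsilon_Y|^\alpha)^{\frac{n-1}{\alpha}}(Y+\varepsilon)V^\varepsilon_Y=\int_0^Y(s+\varepsilon)b(s)\,ds$, then differentiated in $Y$ and divided by $Y+\varepsilon$, reproduces \eqref{2.1}.

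For the gradient estimate \eqref{2.7} I would argue in three bites. Since $b\ge 0$, formula \eqref{2.2} gives $B_\varepsilon\ge 0$, so $V^\varepsilon_Y=F^{-1}(B_\varepsilon)\ge F^{-1}(0)=0$. For the monotonicity in $\varepsilon$, differentiating \eqref{2.2} gives $\partial_\varepsilon B_\varepsilon(Y)=(Y+\varepsilon)^{-2}\int_0^Y(Y-s)b(s)\,ds\ge 0$, so $B_\varepsilon(Y)\le B_{\varepsilon_0}(Y)$ for $0<\varepsilon\le\varepsilon_0$, and $F^{-1}$ increasing yields $V^\varepsilon_Y(Y)\le F^{-1}\bigl(B_{\varepsilon_0}(Y)\bigr)$. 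Finally, bounding $b\le b_0$ in \eqref{2.2} and computing the elementary integral gives $B_{\varepsilon_0}(Y)\le \tfrac{b_0}{2}\bigl((Y+\varepsilon_0)-\varepsilon_0^2/(Y+\varepsilon_0)\bigr)$, which is increasing in $Y$ and hence bounded by $\tfrac{b_0(R+\varepsilon_0)}{2}$; applying $F^{-1}$ once more closes \eqref{2.7}.

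The argument has no serious obstacle, precisely because of the divergence structure of \eqref{2.1} and the homogeneous Neumann condition at $Y=0$. The one place where the hypotheses are genuinely used — and the step I would treat most carefully — is the global invertibility of $F$ on all of $\R$: both the positivity of $1+n\kappa^\alpha|\eta|^\alpha$ and the growth $|F(\eta)|\sim|\eta|^n$ fail for $n\le 0$ (for $n=0$, $F$ stays bounded and $F^{-1}$ is defined only on a bounded interval), which is exactly why the theorem restricts to $n>0$ when $\kappa\neq 0$. The secondary delicate point is the sign of $\partial_\varepsilon B_\varepsilon$, since it is what makes the bound in \eqref{2.7} uniform in $\varepsilon$ — and uniformity in $\varepsilon$ is the whole reason this lemma is needed for the later passage $\varepsilon\to 0$ in \eqref{1.14}.
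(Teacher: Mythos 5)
Your proposal is correct and follows essentially the same route as the paper: integrate the divergence form to get $F(V^\varepsilon_Y)=B_\varepsilon$, invert the strictly increasing $F$ (using $F'=\Phi>0$ and $F(\eta)\to\infty$ for $n>0$ or $\kappa=0$), and derive \eqref{2.7} from $\partial_\varepsilon B_\varepsilon\ge 0$ together with the bound $B_{\varepsilon_0}\le b_0(R+\varepsilon_0)/2$ and the monotonicity of $F^{-1}$. The only additions are cosmetic — the explicit verification that \eqref{2.6} solves \eqref{2.1} and the slightly sharper intermediate bound on $B_{\varepsilon_0}$ — neither of which changes the argument.
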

\begin{proof}
Integrating \eqref{2.1} we get the identity
\begin{equation}\label{2.8}
F( V_Y^\varepsilon)=\left(1+\kappa^\alpha \mid V^\varepsilon_Y \mid  ^\alpha\right)^{\frac{n-1}{\alpha}} V_Y^\varepsilon=B_\varepsilon(Y) 
\end{equation}
for every $Y\in [0,R]$ and $\varepsilon \in (0,\varepsilon_0], \quad \varepsilon_0 \ll R$.

For the function $F(\eta)$ defined in \eqref{2.5} we get the identities
\begin{equation}\label{2.9}
F'(\eta)=\left(1+\kappa^\alpha \mid \eta \mid  ^\alpha\right)^{\frac{n-1-\alpha}{\alpha}}\left(1+n\kappa^\alpha \mid \eta \mid  ^\alpha\right) \end{equation}
\begin{equation}\label{2.10}
F''(\eta)=(n-1)\kappa^\alpha\left(1+\kappa^\alpha \mid \eta \mid  ^\alpha\right)^{\frac{n-1-2\alpha}{\alpha}}\mid \eta \mid  ^{\alpha-2} \eta\left(1+\alpha+n\kappa^\alpha \mid \eta \mid  ^\alpha\right)\quad \textrm{for} \quad \eta \in \R. 
\end{equation}
Since $F'(\eta)>0$ for every $\eta \in \R$, it follows that $F(\eta)$ is strictly monotone increasing function $F(\eta):[0,\infty)\rightarrow [0,\infty)$ because $F(0)=0$, $\lim_{\eta\rightarrow\infty}F(\eta)=\infty$. Hence, there exits the inverse function $F^{-1}(\zeta):[0,\infty)\rightarrow [0,\infty)$ and from \eqref{2.8}, \eqref{2.1} we get
\begin{equation}\nonumber
V^\varepsilon(Y)=-\int_Y^R F^{-1}(B_\varepsilon(s))ds \quad \textrm{for} \quad Y\in[0,R].
\end{equation}
Since 
\begin{equation}\nonumber
\frac{\partial}{\partial\varepsilon}B_\varepsilon(Y)=\frac{1}{(Y+\varepsilon)^2}\int_0^Y \frac{Y-s}{Y+\varepsilon}b(s)ds \ge 0 \quad \textrm{for} \quad \varepsilon \in (0,\varepsilon_0], \quad \varepsilon_0 \ll R
\end{equation}
we have from \eqref{1.4} the estimate
\begin{equation}\label{2.11}
B_\varepsilon (Y) \le B_{\varepsilon_0}(Y)=\frac{1}{Y+\varepsilon_0}\int_0^Y (s+\varepsilon_0)b(s)ds\le \frac{b_0}{Y+\varepsilon_0}\int_0^Y (s+\varepsilon_0)ds \le \frac{b_0}{2}(R+\varepsilon_0)
\end{equation}
and from the monotonicity of $F^{-1}(\zeta)$ we get
\begin{equation}\nonumber
0\le  F^{-1}(B_\varepsilon(Y))\le  F^{-1}\left(\frac{b_0(R+\varepsilon_0)}{2}\right)
\end{equation}
which proves \eqref{2.7}.
\end{proof}
\begin{rem} 
If $n=1$ or $\kappa=0$ then $F(\eta)=\eta$, $F^{-1}(\zeta)=\zeta$ and from \eqref{2.6} it follows that
\begin{eqnarray}
\nonumber
V^\varepsilon(Y)=-\int_Y^R \frac{1}{s+\varepsilon}\int_0^s(t+\varepsilon)b(t)dtds \\ \nonumber
\quad \textrm{for every} \quad Y\in[0,R] \quad \textrm{and} \quad \varepsilon \in (0,\varepsilon_0], \quad \varepsilon_0 \ll R.
\end{eqnarray}
\end{rem}
\begin{theorem} \label{thm2}
Suppose $\alpha> 0$, $n\le 0$, $\varepsilon \in (0,\varepsilon_0], \quad \varepsilon_0 \ll R$. Then problem \eqref{2.1} has a unique classical solution 
$V^\varepsilon(Y)\in C^2([0,R))\cap C^1([0,R])$
\begin{equation}\label{2.12}
V^\varepsilon(Y)=-\int_Y^R F^{-1}(B_\varepsilon(s))ds \quad \textrm{for} \quad Y\in[0,R]
\end{equation}
(i) for $n=0$ iff
\begin{equation}\label{2.13}
B_\varepsilon(Y))\le \kappa^{-1}=\lim_{\eta\rightarrow \infty}F(\eta);
\end{equation}
(ii) for $n<0$ iff
\begin{equation}\label{2.14}
B_\varepsilon(Y))\le \left(\frac{n-1}{n}\right)^\frac{n-1}{\alpha}\kappa^{-1}\left(-\frac{1}{n}\right)^\frac{1}{\alpha}=F\left(\kappa^{-1}\left(-\frac{1}{n}\right)^\frac{1}{\alpha}\right)
\end{equation}
for every $Y\in [0,R]$.

Moreover, the estimate \eqref{2.7} holds.
\end{theorem}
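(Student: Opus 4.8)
The plan is to follow the proof of Theorem~\ref{thm1} up to the first integral and then confront the new feature that for $n\le 0$ the map $F$ of \eqref{2.5} is no longer a bijection of $[0,\infty)$ onto $[0,\infty)$. First I would integrate \eqref{2.1}: multiplying by $Y+\varepsilon$, integrating over $(0,Y)$ and using $V^\varepsilon_Y(0)=0$ gives the first integral $F(V^\varepsilon_Y(Y))=B_\varepsilon(Y)$, that is \eqref{2.8}, which holds for any classical solution. Since $0\le b\le b_0$ and $b\not\equiv 0$ we have $B_\varepsilon\ge 0$ on $[0,R]$ and $B_\varepsilon>0$ on $(0,R]$; as $F$ has the sign of its argument, \eqref{2.8} forces $V^\varepsilon_Y(Y)\ge 0$.

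Next I would read off from \eqref{2.9} the shape of $F$ on $[0,\infty)$. For $n=0$, $F'(\eta)=(1+\kappa^\alpha\eta^\alpha)^{-\frac{\alpha+1}{\alpha}}>0$, so $F$ increases strictly from $F(0)=0$ to the finite, unattained supremum $\lim_{\eta\to\infty}F(\eta)=\kappa^{-1}$. For $n<0$, $F'>0$ on $[0,\eta_0)$, $F'(\eta_0)=0$, $F'<0$ on $(\eta_0,\infty)$ with $\eta_0=\kappa^{-1}(-1/n)^{1/\alpha}$; since $1+\kappa^\alpha\eta_0^\alpha=(n-1)/n$, $F$ rises on $[0,\eta_0]$ to its global maximum $F(\eta_0)=\bigl((n-1)/n\bigr)^{\frac{n-1}{\alpha}}\kappa^{-1}(-1/n)^{1/\alpha}$ and then decreases to $0$. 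In both cases the restriction of $F$ to the increasing part, $[0,\eta_0]$ (respectively $[0,\infty)$ for $n=0$), is a homeomorphism onto $[0,F(\eta_0)]$ (respectively $[0,\kappa^{-1})$) with continuous inverse $F^{-1}$, which is $C^2$ on the open part; note $(F^{-1})'$ becomes unbounded as its argument approaches the threshold $F(\eta_0)$ because $F'(\eta_0)=0$. This $F^{-1}$ is the one appearing in \eqref{2.12}.

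The core of the argument is the branch selection. \emph{Necessity:} if a classical solution exists then $V^\varepsilon_Y(Y)$ is finite and $\ge 0$, hence $B_\varepsilon(Y)=F(V^\varepsilon_Y(Y))\le\sup_{\eta\ge 0}F(\eta)$, which is the right-hand side of \eqref{2.13} for $n=0$ and of \eqref{2.14} for $n<0$. \emph{Sufficiency and uniqueness:} conversely, assume \eqref{2.13} resp. \eqref{2.14}. Then $B_\varepsilon(Y)$ lies in the range of the increasing branch, and since $B_\varepsilon(0)=0$ gives $V^\varepsilon_Y(0)=0$, an intermediate-value argument shows $V^\varepsilon_Y$ never leaves $[0,\eta_0]$ (crossing $\eta_0$ would force $B_\varepsilon=F(\eta_0)$ at that point with $V^\varepsilon_Y$ passing onto the decreasing branch, which is excluded once the admissible classical solution is required to stay in the non-degenerate regime $|V^\varepsilon_Y|\le\eta_0$). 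Hence $V^\varepsilon_Y(Y)=F^{-1}(B_\varepsilon(Y))$ is uniquely determined; integrating and imposing $V^\varepsilon(R)=0$ yields \eqref{2.12}, and conversely \eqref{2.12} is checked to solve \eqref{2.1} by differentiation. For the regularity, $B_\varepsilon\in C^2([0,R])$ by the computation after \eqref{2.4}, and $F^{-1}\in C^2$ on $[0,F(\eta_0))$, so $V^\varepsilon\in C^2$ wherever $B_\varepsilon<F(\eta_0)$; where $B_\varepsilon$ reaches the threshold (a boundary point in the borderline case) $(F^{-1})'$ blows up but $V^\varepsilon_Y$ remains bounded, leaving only $V^\varepsilon\in C^1$ there, so $V^\varepsilon\in C^2([0,R))\cap C^1([0,R])$. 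Finally \eqref{2.7} follows as in Theorem~\ref{thm1}: $\partial_\varepsilon B_\varepsilon\ge 0$ gives $B_\varepsilon\le B_{\varepsilon_0}\le\tfrac{1}{2}b_0(R+\varepsilon_0)$ as in \eqref{2.11}, and monotonicity of $F^{-1}$ on the increasing branch transports these bounds to $V^\varepsilon_Y=F^{-1}(B_\varepsilon)$.

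The main obstacle is exactly this branch selection together with the regularity bookkeeping: unlike in Theorem~\ref{thm1}, where $F$ is a global diffeomorphism and \eqref{2.6} is immediate, one must show that only the increasing branch is admissible and that the solution does not degenerate before $F(\eta_0)$ is attained. The borderline case $B_\varepsilon=F(\eta_0)$ is what causes the loss of $C^2$ at the pipe wall and, for $n=0$, makes the inequality in \eqref{2.13} effectively strict, since there $\sup F$ is not attained.
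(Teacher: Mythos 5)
Your proposal is correct and follows essentially the same route as the paper: derive the first integral $F(V^\varepsilon_Y)=B_\varepsilon(Y)$, analyze the monotonicity of $F$ (strictly increasing onto $[0,\kappa^{-1})$ for $n=0$; increasing up to the global maximum $F(\eta_0)$ then decreasing for $n<0$), invert on the increasing branch to get \eqref{2.12}, prove necessity by contradiction with $\sup_{\eta\ge 0}F(\eta)$, and obtain \eqref{2.7} from $\partial_\varepsilon B_\varepsilon\ge 0$ and the monotonicity of $F^{-1}$. Your explicit intermediate-value discussion of why $V^\varepsilon_Y$ stays on the increasing branch, and of the borderline case $B_\varepsilon=F(\eta_0)$, only makes explicit what the paper handles implicitly via the definition of $F^{-1}$ on $[0,\zeta_0]$ and the subsequent Remark.
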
 
\begin{rem}
If
\begin{eqnarray}
\nonumber
\textrm{(i)} \quad n=0 \quad \textrm{and} \quad B_\varepsilon(Y_1)=\kappa^{-1}, \quad Y_1\in (0,R) \quad \textrm{then}\\  \nonumber
V^\varepsilon(Y)\in C^2([0,R]\setminus \left\lbrace Y_1 \right\rbrace ) \cap  C^1([0,R]);\\ \nonumber
\textrm{(ii)} \quad n<0 \quad \textrm{and} \quad B_\varepsilon(Y_2)=\left(\frac{n-1}{n}\right)^\frac{n-1}{\alpha}\kappa^{-1}\left(-\frac{1}{n}\right)^\frac{1}{\alpha}, \quad Y_2\in (0,R) \quad \textrm{then}\\ \nonumber
V^\varepsilon(Y)\in C^2([0,R]\setminus\left\lbrace Y_2 \right\rbrace ) \cap  C^1([0,R])
\end{eqnarray}
and $V^\varepsilon(Y)$ is a $C^1([0,R])$ generalized solution of \eqref{2.1}.
\end{rem} 
\begin{proof}[Proof of Theorem~\ref{thm2}]
(i) From \eqref{2.9} we have $\displaystyle{F('\eta)=\left(1+\kappa^\alpha \eta ^\alpha\right)^{-\frac{1+\alpha}{\alpha}}> 0}$ for $\eta \ge 0$ and $F(0)=0$, $\lim_{\eta\rightarrow\infty} F(\eta)=\kappa^{-1}$. Hence $F(\eta)$ is strictly increasing function,
\begin{equation}\label{2.15}
F(\eta):[0,\infty)\rightarrow [0,\kappa^{-1})
\end{equation}
and there exists the inverse function
\begin{equation}\label{2.16}
(F^{-1})(\zeta):[0,\kappa^{-1})\rightarrow [0,\infty)
\end{equation}
Since
\begin{equation}\label{2.17}
(F^{-1})'(\zeta)=\frac{1}{F'(F^{-1}(\zeta))}>0
\end{equation}
the inverse function $F^{-1}(\zeta)$ is strictly monotone increasing. \\
\textbf{Sufficiency:} If \eqref{2.13} holds, then from \eqref{2.8}, \eqref{2.16} and \eqref{2.13}, problem \eqref{2.1} is equivalent to \eqref{2.6}. After integration of  \eqref{2.6} from the boundary condition $V^\varepsilon(R)=0$, we get \eqref{2.12}.\\
\textbf{Necessity:} If $V^\varepsilon (Y)\in  C^1([0,R])\cap C^2([0,R))$ is a classical solution of \eqref{2.1}, we suppose by contradiction that \eqref{2.13} fails, i.e., there exists $Y_0\in [0,R]$ such that
\begin{equation}\label{2.18}
B_\varepsilon(Y_0)> \kappa^{-1} \quad \textrm{for some} \quad \varepsilon \in (0,\varepsilon_0], \quad \varepsilon_0 \ll R.
\end{equation}
From \eqref{2.1}, \eqref{2.16} and \eqref{2.18} at the point $Y_0$, we get the following impossible chain of inequalities
\begin{equation}
\kappa^{-1}\ge \sup_{Y\in [0, Y]}F(V_Y^\varepsilon)=B_\varepsilon(Y_0)>\kappa^{-1},
\end{equation}
which proves the necessity of \eqref{2.13}.\\
(ii) From \eqref{2.9} for $n<0$ it follows that 
\begin{equation}\label{2.19}
F'(\eta)>0 \quad \textrm{for} \quad 0\le \eta <\kappa^{-1}\left(-\frac{1}{n}\right)^\frac{1}{\alpha}, \quad F'(\eta_0)=0,
\end{equation}
\begin{equation}\nonumber
\eta_0=\kappa^{-1}\left(-\frac{1}{n}\right)^\frac{1}{\alpha} \quad \textrm{and} \quad F'(\eta)<0 \quad \textrm{for} \quad  \eta>\kappa^{-1}\left(-\frac{1}{n}\right)^\frac{1}{\alpha}
\end{equation}
\begin{equation}\label{2.20}
F(0)=0, \quad F(\eta_0)=\left(\frac{n-1}{n}\right)^\frac{n-1}{\alpha}\kappa^{-1}\left(-\frac{1}{n}\right)^\frac{1}{\alpha}, \quad \lim_{\eta\rightarrow\infty}F(\eta)=0.
\end{equation}
Hence the function
\begin{equation}\label{2.21}
F(\eta):[0,\eta_0)\rightarrow [0,F(\eta_0))
\end{equation}
is strictly monotone increasing one, while
\begin{equation}\label{2.22}
F(\eta):[\eta_0,\infty)\rightarrow [F(\eta_0),0)
\end{equation}
is strictly monotone decreasing one.\\
From \eqref{2.21} and \eqref{2.17} the inverse function $F^{-1}(\zeta)$ of $F(\eta)$ in $[0,\eta_0]$ exits
\begin{equation}\label{2.23}
F^{-1}(\zeta):[0,\zeta_0]\rightarrow [0,\eta_0], \quad \zeta_0=F(\eta_0)=\left(\frac{n-1}{n}\right)^\frac{n-1}{\alpha}\kappa^{-1}\left(-\frac{1}{n}\right)^\frac{1}{\alpha}
\end{equation}
and is strictly monotone increasing one,
\begin{equation}\label{2.24}
F^{-1}(\zeta)\in C^2([0,\eta_0))\cap C([0,\eta_0]).
\end{equation}
\textbf{Sufficiency:} If \eqref{2.13} holds then $F^{-1}(B_\varepsilon(Y))$ is well defined for $Y\in [0,R]$ and integrating \eqref{2.8}, we get \eqref{2.12}. From \eqref{2.24} it follows that $V^\varepsilon (Y)\in  C^2([0,R))\cap C^1([0,R])$.\\
\textbf{Necessity:} If $V^\varepsilon (Y)\in  C^2([0,R))\cap C^1([0,R])$ is a classical solution of \eqref{2.1}, we suppose by contradiction that \eqref{2.14} fails, i.e., there exists $Y_0\in [0,R]$ such that 
\begin{equation}\label{2.25}
B_\varepsilon(Y_0)>\left(\frac{n-1}{n}\right)^\frac{n-1}{\alpha}\kappa^{-1}\left(-\frac{1}{n}\right)^\frac{1}{\alpha}
\end{equation}
for some $\varepsilon \in (0,\varepsilon_0], \quad \varepsilon_0 \ll R$. From the continuity of $B_\varepsilon(Y)$, without loss of generality, we assume that $Y_0\in (0,R)$. From \eqref{2.8} at the point $Y_0$ and \eqref{2.20}, \eqref{2.21}, \eqref{2.25} we get the following impossible chain of inequalities
\begin{eqnarray}
\nonumber
\left(\frac{n-1}{n}\right)^\frac{n-1}{\alpha}\kappa^{-1}\left(-\frac{1}{n}\right)^\frac{1}{\alpha}\ge \sup_{Y\in[0,R]}F(V_Y^\varepsilon (Y) \ge F(V_Y^\varepsilon (Y_0)\\ \nonumber
=B_\varepsilon(Y_0)>\left(\frac{n-1}{n}\right)^\frac{n-1}{\alpha}\kappa^{-1}\left(-\frac{1}{n}\right)^\frac{1}{\alpha}.
\end{eqnarray}
 The estimate \eqref{2.7} follows from \eqref{2.11} and the monotonicity of $F^{-1}(\zeta)$.
\end{proof}

\section{Unsteady Poisseuille flow of Carreau-Yasuda fluid\protect}\label{sec3}
In this section we formulate and prove the main results in this paper for \eqref{1.1}-\eqref{1.4} for different values of the parameters $\alpha>0$, $\beta>0$,$n\in \R$, $\kappa \geq 0$.

For this purpose we consider the regularized problem \eqref{1.14} in nondimensionless form:
\begin{align}\label{3.1}
P_\varepsilon U^\varepsilon=8\beta^2U^\varepsilon_T -\Phi\left(\mid U_Y^\varepsilon \mid\right)U^\varepsilon_{YY}-\frac{1}{Y+\varepsilon}\left(1+\kappa^\alpha \mid U^\varepsilon_Y \mid  ^\alpha\right)^{\frac{n-1}{\alpha}} U^\varepsilon_{Y}=b(Y) \quad \textrm{in} \quad Q \\ \nonumber
U^\varepsilon_Y(T,0) = U^\varepsilon(T,R) = 0 \quad \textrm{for} \quad T \geq 0,  \quad
U^\varepsilon(0,Y)=\Psi(Y) \quad \textrm{for} \quad  Y\in[0, R] \\ \nonumber \textrm{and} \quad \varepsilon \in (0,\varepsilon_0], \quad \varepsilon_0 \ll R.
\end{align}

In the following lemmas we prove apriori estimated for $\mid U^\varepsilon (T,Y) \mid$ and $\mid U_Y^\varepsilon (T,Y) \mid$ in $\overline{Q}$, $Q=\left\lbrace (T,Y); T>0, Y\in (0,R)\right\rbrace$ with constants independent of $\varepsilon$.
\begin{lem}
Suppose $\mid U^\varepsilon (T,Y) \mid \cap C^2(Q) \cap C^1(\overline{Q})$ is a solution of \eqref{3.1}, $\alpha>0$, $\beta>0$ and either $n \ge 1$, $\kappa \neq 0$, or $\kappa=0$, $n\in \R$. Then the estimates 
\begin{equation}\label{3.2}
\mid U^\varepsilon (T,Y) \mid \le K_1(R^2-Y^2)\le K_1R^2,
\end{equation}
\begin{equation}\label{3.3}
\mid U_Y^\varepsilon (T,R) \mid \le 2K_1R
\end{equation}
hold for $T\ge 0$, $Y\in [0,R]$, where 
\begin{equation}\label{3.4}
K_1=\max \left\lbrace\sup_{Y\in[0,R]}\left \vert \frac{\Psi(Y)}{R^2-Y^2}\right\vert, \frac{1}{2} b_0 \right\rbrace.
\end{equation}
\end{lem}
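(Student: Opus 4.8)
The plan is to obtain the bound \eqref{3.2} by a barrier (comparison) argument and then deduce \eqref{3.3} from it together with the boundary condition. First I would introduce the barrier function $W(Y) = K_1(R^2 - Y^2)$ with $K_1$ as in \eqref{3.4}, and check that it is an \emph{admissible supersolution} of \eqref{3.1}: at $Y=R$ we have $W(R)=0 = U^\varepsilon(T,R)$, at $T=0$ we have $W(Y) = K_1(R^2-Y^2) \ge |\Psi(Y)|$ by the choice of $K_1$, and at $Y=0$ the Neumann condition $W_Y(0)=0$ matches $U^\varepsilon_Y(T,0)=0$. The substantive computation is $P_\varepsilon W \ge b(Y)$: since $W$ is time-independent the $U_T$ term drops, $W_Y = -2K_1 Y \le 0$, $W_{YY} = -2K_1$, so
\begin{equation}\nonumber
P_\varepsilon W = -\Phi(|W_Y|)\,W_{YY} - \frac{1}{Y+\varepsilon}\bigl(1+\kappa^\alpha|W_Y|^\alpha\bigr)^{\frac{n-1}{\alpha}} W_Y
= 2K_1\Phi(2K_1Y) + \frac{2K_1 Y}{Y+\varepsilon}\bigl(1+\kappa^\alpha(2K_1Y)^\alpha\bigr)^{\frac{n-1}{\alpha}}.
\end{equation}
Both terms are nonnegative, and by \eqref{1.9}(i)--(ii) one has $\Phi(\eta)\ge 1$ for $n\ge 1$ (and $\Phi\equiv 1$ when $\kappa=0$), hence $P_\varepsilon W \ge 2K_1 \ge b_0 \ge b(Y)$ on $[0,R]$ using \eqref{1.4} and $K_1 \ge b_0/2$. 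Symmetrically $-W$ is a subsolution. (Here I use $\Psi=\Phi$ in the sense of \eqref{1.6}; the paper's $\Psi(\eta)$ in \eqref{1.8}--\eqref{1.10} is the symbol I am reading as $\Phi$.)

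Next I would invoke the comparison principle for the quasilinear parabolic operator $P_\varepsilon$. The difference $Z = U^\varepsilon - W$ satisfies a linear parabolic inequality obtained by subtracting the equations and using the mean value theorem on the nonlinear coefficients $\Phi(|\cdot|)$ and $(1+\kappa^\alpha|\cdot|^\alpha)^{(n-1)/\alpha}$ along the segment joining $U^\varepsilon_Y$ and $W_Y$ (this is where one needs the regularization: for $\varepsilon>0$ and $n\ge 1$ or $\kappa=0$ the operator is uniformly parabolic on bounded gradients, so the linearized coefficients are bounded and the principal coefficient is bounded below by a positive constant — this is precisely the content of \eqref{1.11}(i)--(ii)). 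Then $Z \le 0$ on the parabolic boundary $\{T=0\}\cup\{Y=R\}$ and $Z_Y(T,0)=0$; the maximum principle (with the Hopf lemma ruling out an interior-boundary maximum at $Y=0$ created by the Neumann condition) yields $Z\le 0$, i.e. $U^\varepsilon \le W$ throughout $\overline Q$. Applying the same to $-U^\varepsilon - W$ gives $U^\varepsilon \ge -W$, which is exactly \eqref{3.2}, and the trivial bound $R^2 - Y^2 \le R^2$ finishes that display.

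For \eqref{3.3} I would argue at the boundary point $Y=R$. We know $U^\varepsilon(T,R)=0=W(R)$ and, from \eqref{3.2}, $U^\varepsilon(T,Y)\le W(Y)$ and $-U^\varepsilon(T,Y)\le W(Y)$ for $Y$ near $R$; since $W(R)=U^\varepsilon(T,R)=0$, the one-sided difference quotients satisfy
\begin{equation}\nonumber
\frac{U^\varepsilon(T,Y)-U^\varepsilon(T,R)}{Y-R} \ \ge\ \frac{W(Y)-W(R)}{Y-R}\quad\text{and}\quad \frac{-U^\varepsilon(T,Y)+U^\varepsilon(T,R)}{Y-R}\ \ge\ \frac{W(Y)-W(R)}{Y-R}
\end{equation}
for $Y<R$, and letting $Y\uparrow R$ gives $|U_Y^\varepsilon(T,R)| \le |W_Y(R)| = 2K_1R$. (Equivalently: $W\pm U^\varepsilon \ge 0$ with equality at $Y=R$ forces $(W\pm U^\varepsilon)_Y(T,R)\le 0$.) This uses only that $U^\varepsilon\in C^1(\overline Q)$, which is assumed.

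The main obstacle I anticipate is purely the comparison step: one must be careful that the Neumann condition at $Y=0$ does not spoil the maximum principle, and that the coefficient $\frac{1}{Y+\varepsilon}$ — singular as $\varepsilon\to 0$ but harmless for fixed $\varepsilon>0$ — together with the mean-value linearization of $\Phi$ genuinely produces a \emph{uniformly} parabolic linear operator with locally bounded coefficients, so that the classical comparison principle applies. The inequality $\Phi\ge 1$ from \eqref{1.9} is doing the real work in making $W$ a supersolution; for $n<1$ it would fail, which is consistent with the lemma's hypothesis $n\ge1$ (or $\kappa=0$). Everything else is bookkeeping.
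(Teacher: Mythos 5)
Your barrier and the boundary inequalities are exactly the paper's ($H(T,Y)=K_1(R^2-Y^2)$ with $K_1$ from \eqref{3.4}, zero data on $\Gamma_3$, matched Neumann condition at $Y=0$, and the difference-quotient deduction of \eqref{3.3} from \eqref{3.2} at $Y=R$), and your supersolution computation uses the same two facts the paper uses: $\Phi\ge 1$ for $n\ge 1$ or $\kappa=0$ (you are right that the $\Psi$ in \eqref{1.8}--\eqref{1.9} is a typo for $\Phi$) and the nonnegativity of the first-order term. The one substantive divergence is \emph{where} you evaluate the nonlinear coefficients, and it matters. You compute $P_\varepsilon W$ with $\Phi(|W_Y|)=\Phi(2K_1Y)$ and then must run a genuinely quasilinear comparison, linearizing $\Phi(|\cdot|)$ by the mean value theorem along the segment joining $U^\varepsilon_Y$ and $W_Y$. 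That step is not as harmless as you claim: by \eqref{1.7}, $\Phi'(\eta)$ carries a factor $\eta^{\alpha-1}$, so for $\alpha\in(0,1)$ the linearized first-order coefficient blows up wherever both gradients are small --- which happens precisely near $Y=0$, where $W_Y=-2K_1Y\to 0$ and $U^\varepsilon_Y(T,0)=0$. One only gets $|\Phi(|W_Y|)-\Phi(|U^\varepsilon_Y|)|\lesssim |W_Y-U^\varepsilon_Y|^{\alpha}$ there, not a bounded multiple of $|W_Y-U^\varepsilon_Y|$, so the ``linear parabolic inequality with bounded coefficients'' you invoke does not come for free. (The hypothesis $\alpha>0$ allows this regime; nothing in the lemma excludes $\alpha<1$.)

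The paper sidesteps this entirely: it defines the \emph{linear} operator $P$ in \eqref{3.5} with coefficients frozen at $U^\varepsilon_Y$, so that $PU^\varepsilon=b$ is just \eqref{1.5} rewritten, and then checks $PH=2K_1\Phi(|U^\varepsilon_Y|)+\frac{2K_1Y}{Y+\varepsilon}\bigl(1+\kappa^\alpha|U^\varepsilon_Y|^\alpha\bigr)^{\frac{n-1}{\alpha}}\ge 2K_1\ge b_0\ge b=PU^\varepsilon$. Since $H_{YY}=-2K_1<0$ and $H_Y\le 0$, the inequalities $\Phi\ge1$ and positivity of the power term hold \emph{uniformly in the argument}, so the check goes through with the coefficients evaluated at $U^\varepsilon_Y$ just as well as at $W_Y$; then $P(H-U^\varepsilon)\ge0$ for a single linear operator and the classical maximum principle (plus the Hopf argument at $Y=0$, as you describe) applies with no linearization of $\Phi$ at all. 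Your proof is repaired by this one-line reformulation --- replace ``$W$ is a supersolution of the quasilinear equation, hence comparison'' by ``$H$ is a supersolution of the linear equation satisfied by $U^\varepsilon$'' --- and the rest of your argument, including the derivation of \eqref{3.3}, stands as written.
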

\begin{proof}
For the function $H(T,Y)=K_1(R^2-Y^2)$ and the operator
\begin{equation}\label{3.5}
PW=8\beta^2W_T -\Phi\left(\mid U_Y^\varepsilon \mid\right)W_{YY}-\frac{1}{Y+\varepsilon}\left(1+\kappa^\alpha \mid U^\varepsilon_Y \mid  ^\alpha\right)^{\frac{n-1}{\alpha}} W_{Y}
\end{equation}
we get from \eqref{3.4}, \eqref{1.9}$_i$ and \eqref{1.9}$_{ii}$ the estimate 
\begin{align}\nonumber
PH=&2\Phi\left(\mid U_Y^\varepsilon \mid\right)K_1+\frac{2K_1Y}{Y+\varepsilon}\left(1+\kappa^\alpha \mid U^\varepsilon_Y \mid  ^\alpha\right)^{\frac{n-1}{\alpha}} \ge 2K_1\ge b_0 \ge b(Y)=PU^\varepsilon \\
&\textrm{for} \quad (T,Y)\in Q.\nonumber
\end{align}
Hence $P(H-U^\varepsilon ) \ge 0$ in $Q$, $H(0,Y)-U^\varepsilon(0,Y)=K_1(R^2-Y^2)-\Psi(Y)\ge 0$ for $Y\in [0,R]$, $H(T,R)-U^\varepsilon(T,R)=0$ and $H_Y(T,0)-U_Y^\varepsilon(T,0)=0$ for $T\ge 0$. From the strong interior maximum principle $H(T,Y)-U^\varepsilon(T,Y)$ does not attain maximum or minimum at some interior point of $Q$ and from the strong boundary maximum principle also on $\Gamma_1=\left\lbrace(T,0); T>0 \right\rbrace$ \cite{Ladyzhenskaya et al}, \cite{Protter_Weinberger:1967}. Hence $H(T,Y)-U^\varepsilon(T,Y)$ attains its maximum and minimum on the rest of the parabolic boundary $\Gamma_2\cup\Gamma_3$:
\begin{equation}\label{3.6}
\Gamma_2=\left\lbrace(0,Y);Y\in[0,R]\right\rbrace, \quad \Gamma_3=\left\lbrace(T,R);T\ge 0\right\rbrace.
\end{equation}
The estimate \eqref{3.2} follows from the choice of $K_1$ and the zero boundary condition on $\Gamma_3$.

From \eqref{3.2} the boundary gradient estimate becomes:
\begin{equation}\label{3.7}
\mid U_Y^\varepsilon(T,R) \mid\le 2K_1R \quad \textrm{for} \quad T\ge 0.
\end{equation}
\end{proof}
\begin{lem}
Suppose $U^\varepsilon (T,Y) \cap C^2(Q) \cap C^1(\overline{Q})$ is a solution of \eqref{3.1}, $\alpha>0$, $\beta>0$ and one of the following conditions holds
\begin{align}  \label{3.8} 
&\textrm{(i)} \quad n \in (0,1), \kappa \neq 0  \quad \textrm{or}
\\ \nonumber
&\textrm{(ii)} \quad n=0, \kappa\neq 0, \sup_{Y\in [0,R]} B_0(Y)< \kappa^{-1}\quad \textrm{or}\\ \nonumber
&\textrm{(iii)} \quad n < 0, \kappa\neq 0, \sup_{Y\in [0,R]} B_0(Y)< \left(\frac{n-1}{n}\right)^{\frac{n-1}{\alpha}} \kappa^{-1}\left(-\frac{1}{n}\right)^{\frac{1}{\alpha}}.
\end{align} 
If
\begin{equation} \label{3.9}
\mid\Psi(Y) \mid < -V^0(Y) \quad \textrm{for} \quad Y\in [0,R], 
\end{equation}
where $V^0(Y)$ is defined in Theorem \ref{thm1} in case \eqref{3.8}$_{(i)}$ and in Theorem \ref{thm2} in case \eqref{3.8}$_{(ii)}$ and \eqref{3.8}$_{(iii)}$, then the estimates
\begin{equation} \label{3.12}
\mid U^\varepsilon (T,Y) \mid \le (R-Y)F^{-1}\left(\frac{b_0(R+\varepsilon_0)}{2} \right\rbrace
\end{equation}
\begin{equation} \label{3.13}
\mid U_Y^\varepsilon (T,R) \mid \le F^{-1}\left(\frac{b_0(R+\varepsilon_0)}{2}\right\rbrace \quad \textrm{hold for} \quad T\ge 0, Y\in [0,R].
\end{equation}
\begin{proof}
If $V^\varepsilon (Y)$ is defined in Theorem \ref{thm1} for \eqref{3.8}$_{(i)}$ and in Theorem \ref{thm2} for \eqref{3.8}$_{(ii)}$ and \eqref{3.8}$_{(iii)}$, then for the operator $P$ given in \eqref{3.5}, we have
\begin{align}
\nonumber
PV^\varepsilon&=-b(Y)-A_1\left[ (U^\varepsilon_Y)^2-(V^\varepsilon_Y)^2\right ] \quad \textrm{in Q, where}\\ \nonumber
A_1&=\frac{1}{2}\kappa^\alpha \int_0^1\left[\theta(U^\varepsilon_Y)^2+(1-\theta)(V^\varepsilon_Y)^2\right]^{\frac{\alpha-2}{2}}d\theta \Bigl\{ n(n-1)\left(V^\varepsilon_{YY}+\frac{V^\varepsilon_{Y}}{Y+\varepsilon}\right )\\ \nonumber
&\int_0^1\left[\theta\left(1+\kappa^\alpha \mid U^\varepsilon_Y \mid  ^\alpha\right)+(1-\theta)\left(1+\kappa^\alpha \mid V^\varepsilon_Y \mid  ^\alpha\right)\right]^{\frac{n-1-\alpha}{\alpha}}d\theta\\ \nonumber
&+V^\varepsilon_{YY}(1-n)(n-1-\alpha)\int_0^1\left[\theta\left(1+\kappa^\alpha \mid U^\varepsilon_Y \mid  ^\alpha\right)+(1-\theta)\left(1+\kappa^\alpha \mid V^\varepsilon_Y \mid  ^\alpha\right)\right]^{\frac{n-1-2\alpha}{\alpha}}d\theta \Bigr\}
\end{align}
In the above calculations, we use the identity
\begin{align}\nonumber
\frac{V^\varepsilon_{Y}}{Y+\varepsilon} \left[\left(1+\kappa^\alpha \mid U^\varepsilon_Y \mid  ^\alpha\right)^{\frac{n-1}{\alpha}}-\left(1+\kappa^\alpha \mid V^\varepsilon_Y \mid  ^\alpha\right)^{\frac{n-1}{\alpha}}\right]&+\left[\Phi(\mid U^\varepsilon_Y \mid)-\Phi(\mid V^\varepsilon_Y \mid)\right]V^\varepsilon_{YY}\\ \nonumber
&=-A_1\left[(U^\varepsilon)^2-(V^\varepsilon)^2\right]
\end{align}
Thus the function $W=U^\varepsilon(T,Y)+V^\varepsilon(T,Y)$ satisfies the problem
\begin{align} \label{3.14} 
&PW-A_1\left(U^\varepsilon_Y-V^\varepsilon_Y\right)W_Y=0 \quad \textrm{in Q}\\ \nonumber
&W_Y(T,0)=W(T,R)=0 \quad \textrm{for} \quad T\ge 0, W(0,Y)=\Psi(Y)+V^\varepsilon(Y)\le 0 \quad \textrm{for} \quad Y\in [0,R] \\ \nonumber &\textrm{and} \quad \varepsilon \in (0,\varepsilon_0], \varepsilon_0 \ll R.
\end{align}
Hence, from the strong interior and boundary maximum principle for classical solutions \cite{Ladyzhenskaya et al}, \cite{Protter_Weinberger:1967}, it follows that $W(T,Y)$ does not attain a positive maximum in $\overline{Q}$, i.e., 
\begin{equation}\nonumber
W(T,Y)\le 0 \quad \textrm{in } \overline{Q} \quad \textrm{and} \quad U^\varepsilon(T,Y)\le -V^\varepsilon(T,Y)\quad \textrm{for} \quad T\ge 0, Y\in[0,R].
\end{equation}
The opposite inequality $U^\varepsilon(T,Y)\ge V^\varepsilon(T,Y)$ follows in the same way by means of the function $W_1=-U^\varepsilon(T,Y)+V^\varepsilon(T,Y)$, which satisfies \eqref{3.14} with $W_1(0,Y)=-\Psi(Y)+V^\varepsilon(Y)\le 0$ for $Y\in[0,R]$ and sufficiently small positive $\varepsilon$. Thus \eqref{3.12} follows from \eqref{2.14} and the monotonicity of $F^{-1}(\zeta)$. The estimate \eqref{3.13} is a trivial corollary of \eqref{3.12}.
\end{proof}
\end{lem}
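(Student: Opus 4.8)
The plan is to trap $U^\varepsilon$ between the regularised steady solution $V^\varepsilon$ and its reflection $-V^\varepsilon$ by means of the maximum principle, $V^\varepsilon$ being the function produced by Theorem~\ref{thm1} in case \eqref{3.8}$_{(i)}$ and by Theorem~\ref{thm2} in cases \eqref{3.8}$_{(ii)}$, \eqref{3.8}$_{(iii)}$. First I would collect the preliminaries. Since $\partial_\varepsilon B_\varepsilon\ge 0$ one has $B_0\le B_\varepsilon\le B_{\varepsilon_0}$, so by the monotonicity of $F^{-1}$ and the formulas \eqref{2.6}, \eqref{2.12} it follows that $V^\varepsilon(Y)\le V^0(Y)\le 0$ on $[0,R]$; combined with \eqref{3.9} this yields $\Psi(Y)+V^\varepsilon(Y)\le 0$ and $-\Psi(Y)+V^\varepsilon(Y)\le 0$ on $[0,R]$. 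In cases \eqref{3.8}$_{(ii)}$, \eqref{3.8}$_{(iii)}$ the strict bounds on $\sup_Y B_0$ ensure, for $\varepsilon_0$ small, that $\sup_Y B_{\varepsilon_0}$ stays strictly below the critical value $\kappa^{-1}$ (resp. $F(\eta_0)$), hence $V^\varepsilon_Y<\eta_0$, $\Phi(|V^\varepsilon_Y|)>0$ and $V^\varepsilon$ is genuinely of class $C^2$ there, so Theorems~\ref{thm1}, \ref{thm2} do apply.

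The core of the argument is to linearise the difference of the evolution equation and the steady one. Let $P$ be the frozen-coefficient operator \eqref{3.5}, with coefficients evaluated at $|U^\varepsilon_Y|$; then $PU^\varepsilon=b(Y)$. Subtracting the steady identity $LV^\varepsilon=b(Y)$, written in non-divergence form, and representing each of the differences $\Phi(|U^\varepsilon_Y|)-\Phi(|V^\varepsilon_Y|)$ and $(1+\kappa^\alpha|U^\varepsilon_Y|^\alpha)^{(n-1)/\alpha}-(1+\kappa^\alpha|V^\varepsilon_Y|^\alpha)^{(n-1)/\alpha}$ through the integral (mean-value) representation in terms of the squared gradient, one obtains
\[
PV^\varepsilon=-b(Y)-A_1\bigl[(U^\varepsilon_Y)^2-(V^\varepsilon_Y)^2\bigr],
\]
with $A_1$ the explicit integral coefficient in the statement. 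Adding the two identities and using $(U^\varepsilon_Y)^2-(V^\varepsilon_Y)^2=(U^\varepsilon_Y-V^\varepsilon_Y)W_Y$, the function $W:=U^\varepsilon+V^\varepsilon$ solves
\[
PW-A_1\,(U^\varepsilon_Y-V^\varepsilon_Y)\,W_Y=0\quad\text{in }Q,\qquad W_Y(T,0)=W(T,R)=0,\qquad W(0,Y)=\Psi(Y)+V^\varepsilon(Y)\le 0 .
\]
The essential structural feature is that $W$ satisfies a \emph{linear} parabolic equation with \emph{no} zeroth-order term; killing that term is exactly why one forms the sum $W$. The step I expect to be the main obstacle is guaranteeing parabolicity: $P$ is parabolic only where $\Phi(|U^\varepsilon_Y|)>0$, and for $n<0$ this fails once $|U^\varepsilon_Y|$ reaches $\eta_0$, so one has to know — a priori, or by a continuation argument in $T$ feeding on the very bound being proved — that the hypotheses \eqref{3.8} and \eqref{3.9} keep $|U^\varepsilon_Y|$ strictly below $\eta_0$; this is where those assumptions genuinely enter.

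With parabolicity in hand the rest is quick. By the strong interior maximum principle $W$ cannot have a positive maximum inside $Q$, and by the strong boundary (Hopf) maximum principle it cannot have one on $\Gamma_1=\{(T,0):T>0\}$ either; hence its maximum over $\overline{Q}$ is taken on $\Gamma_2\cup\Gamma_3$, where $W\le 0$, so $W\le 0$ in $\overline{Q}$, i.e. $U^\varepsilon\le -V^\varepsilon$. The same reasoning applied to $W_1:=-U^\varepsilon+V^\varepsilon$, which obeys the same equation with $W_1(0,Y)=-\Psi(Y)+V^\varepsilon(Y)\le 0$, yields $U^\varepsilon\ge V^\varepsilon$. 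Since $V^\varepsilon\le 0$ this gives $|U^\varepsilon(T,Y)|\le -V^\varepsilon(Y)=\int_Y^R F^{-1}(B_\varepsilon(s))\,ds$, and \eqref{2.7} together with \eqref{2.11} and the monotonicity of $F^{-1}$ bounds the integrand by $F^{-1}(b_0(R+\varepsilon_0)/2)$, which proves \eqref{3.12}. Finally \eqref{3.13} is immediate: dividing \eqref{3.12} by $R-Y$, letting $Y\to R^-$ and using $U^\varepsilon(T,R)=0$ gives $|U^\varepsilon_Y(T,R)|\le F^{-1}(b_0(R+\varepsilon_0)/2)$.
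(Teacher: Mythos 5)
Your proposal follows essentially the same route as the paper: comparison with the steady regularized solution $V^\varepsilon$, the mean-value linearization producing $A_1$, the zeroth-order-free equation for $W=U^\varepsilon+V^\varepsilon$ (and $W_1=-U^\varepsilon+V^\varepsilon$), the maximum principle, and then \eqref{2.11} with the monotonicity of $F^{-1}$. Your added details — that $\partial_\varepsilon B_\varepsilon\ge 0$ gives $V^\varepsilon\le V^0$ so that \eqref{3.9} yields the required sign of $W(0,Y)$, and the explicit flag that parabolicity of $P$ must be secured for $n\le 0$ — are correct refinements of steps the paper leaves implicit.
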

\begin{lem}
Suppose $\mid U^\varepsilon (T,Y) \mid \cap C^3(Q) \cap C^2(\overline{Q})$ is a solution of \eqref{3.1}, $\alpha>0$, $\beta>0$, $n\in \R$, $\kappa\ge 0$. Then the estimate
\begin{equation}\label{3.15}
\mid U_T^\varepsilon (T,Y) \mid \le K_2 \quad \textrm{for} \quad T\ge0, Y\in[0,R], \varepsilon \in (0,\varepsilon_0], \varepsilon_0 \ll R,
\end{equation}
holds, where
\begin{equation}\label{3.16}
K_2=\frac{1}{8\beta^2}\Bigl[\sup_{Y\in[0,R]}\vert \Phi\left(\vert \Psi'(Y)\vert \right)\Psi''(Y)\vert +\sup_{Y\in[0,R]}\left(1+\kappa^\alpha\vert \Psi'(Y)\vert ^\alpha \right)^\frac{n-1}{\alpha}\Bigl\vert \frac{\Psi'(Y)}{Y} \Bigr\vert +b_0 \Bigr]
\end{equation}
\end{lem}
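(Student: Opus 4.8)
The plan is to derive \eqref{3.15} from the maximum principle applied to $V:=U_T^\varepsilon$, which satisfies a \emph{linear}, zeroth-order-free parabolic equation. Since the source $b(Y)$ in \eqref{3.1} is independent of $T$, differentiating \eqref{3.1} in $T$ removes it. Writing the convective term of \eqref{3.1} as $\frac{1}{Y+\varepsilon}F(U_Y^\varepsilon)$ with $F$ from \eqref{2.5}, recalling that $F'(\eta)=\Phi(|\eta|)$ (compare \eqref{2.9} and \eqref{1.6}), and using the identity $\partial_T\!\left[\Phi(|U_Y^\varepsilon|)U_{YY}^\varepsilon\right]=\Phi(|U_Y^\varepsilon|)V_{YY}+\partial_Y\!\left[\Phi(|U_Y^\varepsilon|)\right]V_Y$, one obtains in $Q$
\[
8\beta^2V_T-\Phi(|U_Y^\varepsilon|)V_{YY}-\left(\partial_Y\!\left[\Phi(|U_Y^\varepsilon|)\right]+\frac{1}{Y+\varepsilon}\Phi(|U_Y^\varepsilon|)\right)V_Y=0 .
\]
Because $\varepsilon>0$ the first-order coefficient is non-singular at $Y=0$, and the assumed regularity $U^\varepsilon\in C^3(Q)\cap C^2(\overline{Q})$ gives $V\in C^2(Q)\cap C^1(\overline{Q})$ with $V_Y\in C(\overline{Q})$ — exactly what the interior maximum principle and the Hopf lemma will need.

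Next, the lateral data for $V$ are homogeneous: differentiating $U^\varepsilon(T,R)=0$ and $U_Y^\varepsilon(T,0)=0$ in $T$ gives $V(T,R)=0$ and $V_Y(T,0)=0$ for $T\ge0$, while the initial value of $V$ is fixed by the equation itself. Evaluating \eqref{3.1} at $T=0$, where $U^\varepsilon(0,Y)=\Psi(Y)$, one finds
\[
8\beta^2V(0,Y)=b(Y)+\Phi(|\Psi'(Y)|)\Psi''(Y)+\frac{1}{Y+\varepsilon}\left(1+\kappa^\alpha|\Psi'(Y)|^\alpha\right)^{\frac{n-1}{\alpha}}\Psi'(Y).
\]
Using $0\le b\le b_0$, the bound $\frac{|\Psi'(Y)|}{Y+\varepsilon}\le\frac{|\Psi'(Y)|}{Y}$ (which is finite on $[0,R]$ since $\Psi'(0)=0$ and $\Psi\in C^4$), and the positivity of $(1+\kappa^\alpha|\Psi'|^\alpha)^{(n-1)/\alpha}$, one gets $|V(0,Y)|\le K_2$ with $K_2$ exactly the constant \eqref{3.16}.

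Then, provided the equation for $V$ is forward parabolic, i.e.\ $\Phi(|U_Y^\varepsilon|)>0$ on $\overline{Q}$ — which holds unconditionally when $\kappa=0$ (then $\Phi\equiv1$) or when $n\ge0$, $\kappa\ne0$ — the strong interior maximum principle forbids $V$ to attain its maximum or minimum at an interior point of $Q$, and the boundary point (Hopf) lemma \cite{Ladyzhenskaya et al}, \cite{Protter_Weinberger:1967} forbids it on $\Gamma_1=\{(T,0):T>0\}$, where $V_Y\equiv0$. Hence $V$ attains both extrema on $\Gamma_2\cup\Gamma_3$; on $\Gamma_3$, $V=0$, and on $\Gamma_2$, $|V|\le K_2$ by the previous step, so $-K_2\le V(T,Y)\le K_2$ on $\overline{Q}$, which is \eqref{3.15}.

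The main obstacle is precisely the parabolicity needed in the last step. For $n<0$, $\kappa\ne0$ the coefficient $\Phi(|U_Y^\varepsilon|)$ changes sign across $|U_Y^\varepsilon|=\eta_0=\kappa^{-1}(-1/n)^{1/\alpha}$, so there the argument must be run under an a priori gradient bound keeping $|U_Y^\varepsilon|<\eta_0$ — available in the situations in which this lemma is actually used, e.g.\ under \eqref{3.8}$_{(iii)}$ via the preceding lemma — or on the open set where $\Phi(|U_Y^\varepsilon|)>0$; moreover, for $\alpha\in(0,1)$ the coefficient $\partial_Y[\Phi(|U_Y^\varepsilon|)]$ is only guaranteed locally bounded away from the isolated zero set $\{U_Y^\varepsilon=0\}$, which is harmless for the maximum principle but should be acknowledged. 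The remaining steps — the $T$-differentiation and the bound on $V(0,\cdot)$ — are routine.
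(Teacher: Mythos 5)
Your proof follows the same route as the paper's: differentiate \eqref{3.1} in $T$, observe that $V=U_T^\varepsilon$ solves a linear parabolic equation with no zeroth-order term and homogeneous lateral data, bound $V(0,\cdot)$ by $K_2$ using the equation at $T=0$ together with $\Psi'(0)=0$, and conclude from the strong interior and boundary maximum principle that the extrema of $V$ lie on $\Gamma_2\cup\Gamma_3$. Your caveat about the loss of forward parabolicity when $n<0$, $\kappa\ne 0$ is well taken --- the paper's proof applies the maximum principle there without comment, even though the lemma is stated for all $n\in\R$ --- but it does not affect the argument in the regimes where the lemma is actually invoked, where an a priori gradient bound keeps $\Phi(|U_Y^\varepsilon|)>0$.
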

\begin{proof}
Differentiating \eqref{3.1} with respect to $T$, we obtain that $U_T^\varepsilon$ satisfies the problem
\begin{align} \label{3.17} 
P_3U_T^\varepsilon=0 \quad \textrm{in Q}, \quad U_{TY}^\varepsilon(T,0)=0, \quad U_T^\varepsilon(T,R)=0 \quad \textrm{for} \quad T\ge 0\\ 
\nonumber
U_T^\varepsilon(0,Y)=\frac{1}{8\beta^2}\Bigl[ \Phi\left(\vert \Psi'(Y)\vert \right)\Psi''(Y)+\frac{1}{Y+\varepsilon}\left(1+\kappa^\alpha\vert \Psi'(Y)\vert ^\alpha \right)^\frac{n-1}{\alpha}\Psi'(Y)  +b(Y) \Bigr],
\end{align}
where
\begin{align}\nonumber
P_3W&=8\beta^2W_T-\Phi(\vert U_Y^\varepsilon \vert)W_{YY}-\Bigl[\frac{1}{Y+\varepsilon}\left(1+\kappa^\alpha \mid U^\varepsilon_Y \mid  ^\alpha\right)^{\frac{n-1-\alpha}{\alpha}}\\
\nonumber
&\left(1+\kappa^\alpha \mid U^\varepsilon_Y \mid ^\alpha+(n-1)\alpha\kappa^\alpha  U_Y^\varepsilon\mid U^\varepsilon_Y \mid ^{\alpha-2}\right)
+\kappa^\alpha U_Y^\varepsilon\mid U^\varepsilon_Y \mid ^{\alpha-2}U^\varepsilon_{YY}\\
\nonumber
&\left(1+\kappa^\alpha \mid U^\varepsilon_Y \mid  ^\alpha \right)^{\frac{n-1-2\alpha}{\alpha}}\left( 2n-1-\alpha+n(n-\alpha)\kappa^\alpha \mid U^\varepsilon_Y \mid ^\alpha \right)\Bigr]W_Y
\end{align}
From the strong interior and boundary maximum principle \cite{Ladyzhenskaya et al}, \cite{Protter_Weinberger:1967} $U^\varepsilon_T (T,Y)$ attains its maximum and minimum in $\overline{Q}$ on the part $\Gamma_2\cup\Gamma_3$ of the parabolic boundary. Estimate \eqref{3.15} holds from the zero boundary conditions on $\Gamma_3$ and the choice of $K_3$.
\end{proof}
\begin{lem}
Under the assumptions of Lemma 3.3 the estimate 
\begin{equation}\label{3.18}
\left(1+\kappa^\alpha \mid U^\varepsilon_Y \mid  ^\alpha\right)^{\frac{n-1}{\alpha}}\mid U^\varepsilon_Y \mid  ^\alpha\le K_3(Y+\varepsilon) \quad \textrm{for} \quad T\ge0, Y\in[0,R], \varepsilon \in (0,\varepsilon_0], \varepsilon_0 \ll R,
\end{equation}
holds, where
 \begin{equation}\label{3.19}
 K_3=\frac{1}{2}b_0+4\beta^2K_2
 \end{equation}
\end{lem}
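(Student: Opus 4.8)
The plan is to integrate \eqref{3.1} once in the radial variable, turning the PDE into a pointwise identity for the flux $\bigl(1+\kappa^\alpha\mid U_Y^\varepsilon\mid^\alpha\bigr)^{\frac{n-1}{\alpha}}(Y+\varepsilon)U_Y^\varepsilon$, and then to bound the resulting integral by the a priori estimate \eqref{3.15} for $U_T^\varepsilon$ together with \eqref{1.4}. This is the unsteady analogue of the identity \eqref{2.8} used for the stationary problem.

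First I would use that \eqref{3.1} is, as in \eqref{1.14}, equivalent to the divergence form, which after multiplication by $Y+\varepsilon$ reads
\[\frac{\partial}{\partial Y}\Bigl[\bigl(1+\kappa^\alpha\mid U_Y^\varepsilon\mid^\alpha\bigr)^{\frac{n-1}{\alpha}}(Y+\varepsilon)U_Y^\varepsilon\Bigr]=(Y+\varepsilon)\bigl(8\beta^2U_T^\varepsilon-b(Y)\bigr)\quad\textrm{in }Q.\]
Since $U^\varepsilon\in C^2(\overline{Q})$, the bracketed expression is a $C^1$ function of $Y$ on $[0,R]$ for each fixed $T>0$, so I may integrate this identity from $0$ to $Y$; the contribution of the lower endpoint vanishes because $U_Y^\varepsilon(T,0)=0$. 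This yields
\[\bigl(1+\kappa^\alpha\mid U_Y^\varepsilon\mid^\alpha\bigr)^{\frac{n-1}{\alpha}}(Y+\varepsilon)U_Y^\varepsilon=\int_0^Y(s+\varepsilon)\bigl(8\beta^2U_T^\varepsilon(T,s)-b(s)\bigr)\,ds\]
for every $(T,Y)\in\overline{Q}$ and every $\varepsilon\in(0,\varepsilon_0]$.

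Next I would take absolute values. By \eqref{3.15} one has $\mid U_T^\varepsilon\mid\le K_2$, and by \eqref{1.4} one has $0\le b\le b_0$, so the integrand is bounded in modulus by $(8\beta^2K_2+b_0)(s+\varepsilon)$. Using $\int_0^Y(s+\varepsilon)\,ds=\frac{1}{2}\bigl((Y+\varepsilon)^2-\varepsilon^2\bigr)\le\frac{1}{2}(Y+\varepsilon)^2$ and the positivity of the factor $\bigl(1+\kappa^\alpha\mid U_Y^\varepsilon\mid^\alpha\bigr)^{\frac{n-1}{\alpha}}$ I obtain
\[\bigl(1+\kappa^\alpha\mid U_Y^\varepsilon\mid^\alpha\bigr)^{\frac{n-1}{\alpha}}(Y+\varepsilon)\mid U_Y^\varepsilon\mid\le\bigl(8\beta^2K_2+b_0\bigr)\frac{(Y+\varepsilon)^2}{2}.\]
Dividing by $Y+\varepsilon>0$ — which is precisely where the regularization removes the singularity of the original operator at the axis — gives
\[\bigl(1+\kappa^\alpha\mid U_Y^\varepsilon\mid^\alpha\bigr)^{\frac{n-1}{\alpha}}\mid U_Y^\varepsilon\mid\le\Bigl(\frac{1}{2}b_0+4\beta^2K_2\Bigr)(Y+\varepsilon)=K_3(Y+\varepsilon),\]
which is the claimed estimate \eqref{3.18}.

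The argument is short and essentially computational, so I expect no genuine obstacle. The only points that need a word of care are the justification of the integration up to the degenerate endpoint $Y=0$ — handled by the $C^2(\overline{Q})$ regularity and the Neumann condition $U_Y^\varepsilon(T,0)=0$, which annihilates the boundary term — and the choice of the crude bound $\mid 8\beta^2U_T^\varepsilon-b\mid\le 8\beta^2K_2+b_0$ (rather than exploiting the sign of $b$), which is what makes the resulting constant exactly $K_3=\frac{1}{2}b_0+4\beta^2K_2$ as in \eqref{3.19}. Note that \eqref{3.18} is the key gradient estimate, uniform in $\varepsilon$, that later feeds the compactness argument for the passage $\varepsilon\to0$ in \eqref{3.1}.
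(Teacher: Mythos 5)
Your proof is correct and follows essentially the same route as the paper: integrate the divergence form of \eqref{3.1} in $Y$ from $0$, use the Neumann condition at $Y=0$, bound the integrand by $8\beta^2K_2+b_0$ via \eqref{3.15} and \eqref{1.4}, and estimate $\int_0^Y(s+\varepsilon)\,ds\le\tfrac{1}{2}(Y+\varepsilon)^2$. Note that, exactly as in the paper's own proof, what you actually establish is the bound with $\mid U_Y^\varepsilon\mid$ to the first power rather than the exponent $\alpha$ appearing in the displayed statement \eqref{3.18}, which is evidently a typo there (the first-power form is what Lemma 3.5 subsequently uses).
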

\begin{proof}
Integrating \eqref{1.14} from $0$ to $Y\in (0,R]$ we get  from \eqref{1.5}, \eqref{3.15} the estimates
\begin{eqnarray}\nonumber
(Y+\varepsilon)\left(1+\kappa^\alpha \mid U^\varepsilon_Y \mid  ^\alpha\right)^{\frac{n-1}{\alpha}}\mid U^\varepsilon_Y \mid =\Bigl\vert\int_0^Y(s+\varepsilon)\left[8\beta^2U_T^\varepsilon(T,s)-b(s)\right]ds \Bigr\vert\\ \nonumber
\le \left(8\beta^2K_2+b_0\right)\int_0^Y(s+\varepsilon)ds\le K_3(Y+\varepsilon)^2
\end{eqnarray}
which proves \eqref{3.8}.
\end{proof}
\begin{lem}
Suppose $U^\varepsilon (T,Y) \cap C^3(Q) \cap C^2(\overline{Q})$ is a solution of \eqref{3.1}, $\alpha>0$, $\beta>0$\\

(i) If $n>0$, $\kappa\neq 0$ or $\kappa=0$, $n\in\R$ then the estimate
\begin{align}\label{3.20}
 \mid U_Y^\varepsilon (T,Y) \mid \le \max \Bigl\{K_3(R+\varepsilon_0), \left[K_3(R+\varepsilon_0)\kappa^{-\alpha}\right]^\frac{1}{n}\Bigr\}=K_4 \quad &\textrm{holds for} \quad (T,Y)\in \overline{Q} 
 \\ & \textrm{and} \quad \varepsilon \in (0,\varepsilon_0], \varepsilon_0 \ll R;\nonumber
 \end{align}
 
 (ii) If \eqref{3.9} and \eqref{3.8}$_{ii}$ are satisfied, then the estimate
 \begin{equation}\label{3.21}
 \mid U_Y^\varepsilon (T,Y) \mid \le K_5\exp(T) \quad \textrm{holds for} \quad (T,Y)\in \overline{Q} \quad \textrm{and} \quad \varepsilon \in (0,\varepsilon_0], \varepsilon_0 \ll R,
 \end{equation}
 where
 \begin{equation}\label{3.22}
K_5=\max \Bigl\{\frac{1}{8\beta^2}\sup_{Y\in[0,R]}\mid b_Y(Y)\mid, \sup_{Y\in[0,R]}\mid\Psi'(Y)\mid, F^{-1}\Bigl(\frac{b_0(R+\varepsilon_0)}{2}\Bigr) \Bigr\}
 \end{equation}
 
 (iii) If \eqref{3.9}, \eqref{3.8}$_{iii}$ and $K_5 < \eta_0$ are satisfied, then the estimate \eqref{3.21} holds in $\overline{Q_\lambda}$,  $Q_\lambda=\lbrace(T,Y); 0<T<\lambda; 0<Y<R\rbrace$, 
 where 
 \begin{equation} \label{3.22*}
 \lambda<\ln\frac{\eta_0}{K_5}, \quad \eta_0=\kappa^{-1}\left(-\frac{1}{n}\right)^{\frac{1}{\alpha}}
 \end{equation}
 
 \end{lem}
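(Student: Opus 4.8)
The plan is to obtain all three bounds from comparison (barrier) arguments; for (ii) and (iii) the barrier is applied to the equation satisfied by $p:=U^\varepsilon_Y$ rather than to $U^\varepsilon$ itself, which accounts for the factor $e^T$ and the presence of $\sup|b_Y|$ in $K_5$. For (i) no new equation is needed: Lemma 3.4, i.e.\ \eqref{3.18}, already gives $\bigl(1+\kappa^\alpha|U^\varepsilon_Y|^\alpha\bigr)^{\frac{n-1}{\alpha}}|U^\varepsilon_Y|=F(|U^\varepsilon_Y|)\le K_3(Y+\varepsilon)\le K_3(R+\varepsilon_0)$ on $\overline{Q}$, with $F$ the function of \eqref{2.5}. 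When $n>0$, $\kappa\ne0$ (or $\kappa=0$, where $F(\eta)=\eta$), \eqref{2.9} shows $F'>0$, and since $F(0)=0$ and $F(\eta)\to\infty$ the map $F:[0,\infty)\to[0,\infty)$ is a strictly increasing bijection, whence $|U^\varepsilon_Y|\le F^{-1}\bigl(K_3(R+\varepsilon_0)\bigr)$; a two-case estimate --- if $\kappa^\alpha|U^\varepsilon_Y|^\alpha\le1$ the factor $\bigl(1+\kappa^\alpha|U^\varepsilon_Y|^\alpha\bigr)^{\frac{n-1}{\alpha}}$ is bounded below by an absolute constant, so $|U^\varepsilon_Y|$ is controlled by $K_3(R+\varepsilon_0)$, while if $\kappa^\alpha|U^\varepsilon_Y|^\alpha>1$ the same factor is bounded below by a multiple of $\kappa^{n-1}|U^\varepsilon_Y|^{n-1}$, so $|U^\varepsilon_Y|^n$ is controlled by $K_3(R+\varepsilon_0)\kappa^{-\alpha}$ --- then yields the constant $K_4$ of \eqref{3.20}.

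For part (ii) I would differentiate \eqref{3.1} in $Y$. Using \eqref{1.6}, \eqref{1.7}, \eqref{2.9}, the function $p=U^\varepsilon_Y$ satisfies in $Q$
\[
8\beta^2p_T-\Phi(|p|)\,p_{YY}-\Bigl(\Phi'(|p|)\,\mathrm{sgn}(p)\,p_Y+\tfrac{F'(p)}{Y+\varepsilon}\Bigr)p_Y+\frac{\bigl(1+\kappa^\alpha|p|^\alpha\bigr)^{\frac{n-1}{\alpha}}}{(Y+\varepsilon)^2}\,p=b_Y(Y).
\]
Frozen along the given $C^3$ solution $U^\varepsilon$, this is a linear parabolic equation with continuous coefficients on $\overline{Q}$: for $n=0$ the leading coefficient $\Phi(|p|)=\bigl(1+\kappa^\alpha|p|^\alpha\bigr)^{-\frac{1+\alpha}{\alpha}}$ is positive, and the zeroth-order coefficient $c:=(Y+\varepsilon)^{-2}\bigl(1+\kappa^\alpha|p|^\alpha\bigr)^{\frac{n-1}{\alpha}}$ is positive as well, the regularization $\varepsilon>0$ removing the singularity at $Y=0$. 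Since $\pm K_5e^T$ do not depend on $Y$, inserting $w=\pm K_5e^T$ into the operator gives $\pm(8\beta^2+c)K_5e^T$, whose absolute value is at least $8\beta^2K_5\ge\sup_{[0,R]}|b_Y|\ge|b_Y(Y)|$ (using $c\ge0$, $e^T\ge1$, and the first entry of $K_5$ in \eqref{3.22}); hence $K_5e^T$ and $-K_5e^T$ are respectively a super- and a sub-solution. On the parabolic boundary, $p(T,0)=0$ (the condition $U^\varepsilon_Y(T,0)=0$ is a Dirichlet condition for $p$, so no boundary-point lemma is needed), $|p(0,Y)|=|\Psi'(Y)|\le\sup_{[0,R]}|\Psi'|\le K_5$, and $|p(T,R)|=|U^\varepsilon_Y(T,R)|\le F^{-1}\!\bigl(\tfrac{b_0(R+\varepsilon_0)}{2}\bigr)\le K_5$ by Lemma 3.2, whose hypotheses \eqref{3.8}$_{ii}$ and \eqref{3.9} are in force. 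The weak maximum principle for parabolic operators with nonnegative zeroth-order coefficient (\cite{Ladyzhenskaya et al}, \cite{Protter_Weinberger:1967}) then gives $|p|\le K_5e^T$ on $\overline{Q}$, that is \eqref{3.21}.

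For part (iii) the computation is identical, but now $n<0$ makes the leading coefficient $\Phi(|p|)=\bigl(1+n\kappa^\alpha|p|^\alpha\bigr)\bigl(1+\kappa^\alpha|p|^\alpha\bigr)^{\frac{n-1-\alpha}{\alpha}}$ positive only while $|p|<\eta_0=\kappa^{-1}(-1/n)^{1/\alpha}$ (it vanishes at $\eta_0$ and is negative beyond), so the equation for $p$ is parabolic only on the set $\{|p|<\eta_0\}$, whereas $c>0$ still holds for every value of $p$. I would therefore run the barrier argument by continuation in time. At $T=0$ one has $|p|=|\Psi'|\le K_5<\eta_0$, so the set of $\tau\le\ln(\eta_0/K_5)$ for which $|p|\le K_5e^T$ throughout $\overline{Q}\cap\{T\le\tau\}$ is nonempty; on any such strip $|p|\le K_5e^\tau<\eta_0$, the equation there is parabolic, and the argument of (ii) applies verbatim and reestablishes $|p|\le K_5e^T$. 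If the supremum $\Lambda$ of such $\tau$ were strictly below $\ln(\eta_0/K_5)$, then $|p|\le K_5e^\Lambda<\eta_0$ on the closed strip $\{T\le\Lambda\}$, hence by continuity $|p|<\eta_0$ on a slightly larger strip, the barrier argument extends past $\Lambda$, and maximality is contradicted; thus $\Lambda=\ln(\eta_0/K_5)$, and \eqref{3.21} holds on $\overline{Q_\lambda}$ for every $\lambda<\ln(\eta_0/K_5)$, as in \eqref{3.22*}.

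The main obstacle is the continuation step in (iii): one must ensure the solution never leaves the parabolic region $\{|p|<\eta_0\}$, which is exactly what forces the restriction $\lambda<\ln(\eta_0/K_5)$. A related technical point in (ii) and (iii) is that the $Y$-differentiated equation is genuinely quasilinear, so treating it as linear with continuous coefficients on $\overline{Q}$ --- in particular controlling the coefficient $\Phi'(|p|)\,\mathrm{sgn}(p)\,p_Y$ in the maximum-principle step, just as for the operator $P_3$ in the proof of Lemma 3.3 --- is where the care lies; in (i) the only real work is the elementary two-case estimate of $F^{-1}$ behind $K_4$.
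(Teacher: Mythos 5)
Your proposal matches the paper's proof in all essentials: part (i) is deduced from the integrated estimate \eqref{3.18} of Lemma 3.4 together with elementary lower bounds on $F$, and parts (ii)--(iii) differentiate \eqref{3.1} in $Y$ and compare $U^\varepsilon_Y$ with the barrier $\pm K_5 e^T$ via the maximum principle, using \eqref{3.13} on $\Gamma_3$ and $\Psi'$ on $\Gamma_2$, exactly as the paper does with its operator $P_2$. Your only genuine addition is the explicit continuation-in-time argument in (iii) guaranteeing $|U^\varepsilon_Y|<\eta_0$ so that $P_2$ stays parabolic on $\overline{Q_\lambda}$ --- a point the paper asserts without justification --- which is a welcome tightening rather than a different route.
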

  \begin{proof}
 Differentiating \eqref{3.1} with respect to $Y$, we obtain that $U_Y^\varepsilon$ satisfies the problem
 \begin{align}\nonumber
     P_2U_Y^\varepsilon=b_Y(Y) \quad \textrm{in} \quad Q, \quad U_Y^\varepsilon(T,0)=0 \quad \textrm{for} \quad T\ge 0,\\ \label{3.23}
     U_Y^\varepsilon(0,Y)=\Psi'(Y) \quad \textrm{for} \quad Y\in [0,R]
 \end{align}
 where
 \begin{align}\label{3.24}
  P_2W&=8\beta^2W_T-\Phi(\vert U_Y^\varepsilon \vert)W_{YY}-A_2 W_Y +\frac{1}{(Y+\varepsilon)^2}\left(1+\kappa^\alpha \mid U^\varepsilon_Y \mid  ^\alpha\right)^{\frac{n-1}{\alpha}}W,\\ \nonumber
  A_2&=\frac{1}{Y+\varepsilon}\left(1+\kappa^\alpha \mid U^\varepsilon_Y \mid  ^\alpha\right)^{\frac{n-1-\alpha}{\alpha}}\left(1+n\kappa^\alpha \mid U^\varepsilon_Y \mid  ^\alpha\right)\\ 
  \nonumber
  &+(n-1)\kappa^\alpha\left(\alpha+1+n\kappa^\alpha\mid U^\varepsilon_Y \mid  ^\alpha\right)
  U^\varepsilon_Y\mid U^\varepsilon_Y \mid  ^{\alpha-2}\left(1+\kappa^\alpha \mid U^\varepsilon_Y \mid  ^\alpha\right)^{\frac{n-1-2\alpha}{\alpha}}
 \end{align}
 (i) Estimate \eqref{3.20} follows from \eqref{3.18} and the inequalities
  \begin{eqnarray}\nonumber
  \left(1+\kappa^\alpha \mid U^\varepsilon_Y \mid  ^\alpha\right)^{\frac{n-1}{\alpha}}  \mid U^\varepsilon_Y \mid \ge  \mid U^\varepsilon_Y \mid  \quad \textrm{for} \quad n\ge 1, \quad \kappa\ne 0 \quad \textrm{or} \quad \kappa=0 \quad \textrm{and} \quad n\in \R;\\ \nonumber  
  \left(1+\kappa^\alpha \mid U^\varepsilon_Y \mid  ^\alpha\right)^{\frac{n-1}{\alpha}}  \mid U^\varepsilon_Y \mid \ge \left(\kappa^\alpha + \mid U^\varepsilon_Y \mid  ^{-\alpha} \right)\mid U^\varepsilon_Y \mid^n \ge \kappa^\alpha \mid U^\varepsilon_Y \mid  ^n
 \end{eqnarray}
 ii) Under the conditions in Lemma 3.5)$_{ii}$ boundary gradient estimate \eqref{3.13} holds from Lemma 3.2. Simple computations give us 
 \begin{eqnarray}\nonumber
 P_2(K_5\exp(T))=8\beta^2K_5\exp(T)+\frac{K_5}{(Y+\varepsilon)^2} \left(1+\kappa^\alpha \mid U^\varepsilon_Y \mid  ^\alpha\right)^{\frac{n-1}{\alpha}}\exp(T)\ge 8\beta^2K_5\ge b(Y) \quad \textrm{in} \quad Q. 
 \end{eqnarray}
 Thus the function $W(T,Y)=U^\varepsilon_Y (T,Y)-8\beta^2K_5\exp(T)$ is a solution of the problem
\begin{eqnarray}\nonumber
P_2W=b_Y-P_2(K_5\exp(T))\le 0 \quad \textrm{in} \quad Q \\ \nonumber
W(T,0)=-K_5\exp(T)\le 0, \quad W(T,R)=-K_5\exp(T)\le 0  \quad \textrm{for} \quad T\ge 0, \\ \nonumber
W(0,Y)=\Psi'(Y)-K_5\le 0  \quad \textrm{for} \quad Y\in [0,R].
  \end{eqnarray}
From the maximum principle \cite{Ladyzhenskaya et al}, \cite{Protter_Weinberger:1967} we get the estimate $W(T,Y)\le 0$ in $\overline{Q}$, i.e., $U^\varepsilon_Y (T,Y)\le 8\beta^2K_5\exp(T)$  for $T\ge 0$, $Y\in [0,R]$. Analogously, by means of the function $U^\varepsilon_Y (T,Y)+ 8\beta^2K_5\exp(T)$ we obtain the opposite estimate, which proves \eqref{3.21}.

(iii) The proof of (iii) is the same as the proof of (ii) in $\overline{Q_\lambda}$.The only difference is that the operator $P_2$ is uniformly parabolic in $\overline{Q_\lambda}$ and the maximum principle is applicable \cite{Ladyzhenskaya et al}, \cite{Protter_Weinberger:1967}.
 \end{proof}
 \begin{lem}
 Suppose $U^\varepsilon (T,Y) \in C^3(Q) \cap C^1(\overline{Q})$, $\alpha>0$, $\beta>0$. If
 \begin{equation}\label{3.25}
 n>0, \quad \kappa\ne 0, \quad \textrm{or} \quad \kappa=0, \quad n\in \R,
 \end{equation}
 then the estimate
  \begin{equation}\label{3.26}
\mid U_{YY}^\varepsilon (T,Y) \mid \le K_6, \quad \textrm{holds for} \quad T\ge 0, \quad Y\in [0,R] \quad \varepsilon \in (0,\varepsilon_0], \quad \varepsilon_0 \ll R \quad \textrm{with}
 \end{equation}
    \begin{equation}\label{3.27} 
K_6=\Biggl\{
\begin{matrix}
&\left(12\beta^2K_3+\frac{3}{2}b_0\right) \Big/ \Phi(K_4) \quad & \quad\textrm{for} \quad 0<n<1\\
&\left(12\beta^2K_3+\frac{3}{2}b_0\right) &\textrm{for} \quad n>1\\
\end{matrix}
 \end{equation}
 If 
  \begin{equation}\label{3.28}
 n=0, \quad \kappa\ne 0, \quad \sup_{Y\in [0,R]}B_0(Y) < \kappa^{-1}, \quad \textrm{and \eqref{3.9} holds,}
 \end{equation}
 then the estimate 
  \begin{equation}\label{3.30}
\mid U_{YY}^\varepsilon (T,Y) \mid \le \left(12\beta^2K_3+\frac{3}{2}b_0\right)\Big/ \Phi(K_5 \exp(T_0)) =K_7, 
 \end{equation}
is satisfied for $T\in [0,T_0]$, $T_0 < \infty$, $ Y\in [0,R]$, $\varepsilon \in (0,\varepsilon_0]$, $\varepsilon_0 \ll R$.

If
   \begin{equation}\label{3.29}
 n<0, \quad \kappa\ne 0, \quad \sup_{Y\in [0,R]}B_0(Y) < \left(\frac{n-1}{n}\right)^{\frac{n-1}{\alpha}}\kappa^{-1}\left(-\frac{1}{n}\right)^{\frac{1}{\alpha}}, \quad  K_5<\eta_0=\kappa^{-1}\left(-\frac{1}{n}\right)
  \end{equation}
and (49) holds,  then the estimate \eqref{3.30} is obtained for $T\in[0,\lambda]$, $Y\in[0,R]$, where $\lambda$ is defined in \eqref{3.22*}.
  \end{lem}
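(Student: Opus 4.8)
The plan is to avoid the maximum principle altogether in this lemma and instead read off $U^\varepsilon_{YY}$ pointwise from the non-divergence form \eqref{1.5} of the regularized equation \eqref{3.1}: at every point of $Q$,
\begin{equation}\nonumber
\Phi(\mid U^\varepsilon_Y\mid)\,U^\varepsilon_{YY}=8\beta^2U^\varepsilon_T-\frac{1}{Y+\varepsilon}\left(1+\kappa^\alpha\mid U^\varepsilon_Y\mid^\alpha\right)^{\frac{n-1}{\alpha}}U^\varepsilon_Y-b(Y).
\end{equation}
So the first step is to bound the right-hand side by a constant independent of $\varepsilon$, $T$ and $Y$: Lemma 3.3 gives $\mid U^\varepsilon_T\mid\le K_2$; the chain displayed in the proof of Lemma 3.4 (equivalently \eqref{3.18} divided by $Y+\varepsilon$) gives $\frac{1}{Y+\varepsilon}\left(1+\kappa^\alpha\mid U^\varepsilon_Y\mid^\alpha\right)^{\frac{n-1}{\alpha}}\mid U^\varepsilon_Y\mid\le K_3$, which is precisely what tames the apparent singularity of the lower-order coefficient at $Y=0$; and \eqref{1.4} gives $0\le b\le b_0$. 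Adding these, and using $K_3=\frac{1}{2}b_0+4\beta^2K_2$ from \eqref{3.19}, one gets on $\overline{Q}$
\begin{equation}\nonumber
\Phi(\mid U^\varepsilon_Y\mid)\,\mid U^\varepsilon_{YY}\mid\ \le\ 8\beta^2K_2+K_3+b_0\ =\ 12\beta^2K_2+\frac{3}{2}b_0\ =:\ C
\end{equation}
(so the numerator in \eqref{3.27}--\eqref{3.30} is $12\beta^2K_2+\frac{3}{2}b_0$).

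The remaining step is to divide by $\Phi(\mid U^\varepsilon_Y\mid)$, for which a uniform positive lower bound on $\Phi$ along the solution is needed, and this is where \eqref{3.25}, \eqref{3.28} and \eqref{3.29} genuinely differ. If $n\ge 1$ or $\kappa=0$ (the easy part of \eqref{3.25}), then $\Phi\ge 1$ by \eqref{1.9}, so $\mid U^\varepsilon_{YY}\mid\le C$ directly — the $n>1$ branch of \eqref{3.27}. If $0<n<1$, the factorization in \eqref{1.6} shows $\Phi>0$ on $[0,\infty)$ and \eqref{1.7} shows $\Phi'<0$ there (the factor $1+\alpha+n\kappa^\alpha\eta^\alpha$ being positive), so $\Phi$ is strictly decreasing; with the uniform gradient bound $\mid U^\varepsilon_Y\mid\le K_4$ of Lemma 3.5(i) this gives $\Phi(\mid U^\varepsilon_Y\mid)\ge\Phi(K_4)>0$, hence $\mid U^\varepsilon_{YY}\mid\le C/\Phi(K_4)=K_6$. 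Under \eqref{3.28}, where $n=0$, one has $\Phi(\eta)=\left(1+\kappa^\alpha\eta^\alpha\right)^{-\frac{1+\alpha}{\alpha}}>0$, still strictly decreasing, but now only the time-growing bound $\mid U^\varepsilon_Y\mid\le K_5\exp(T)$ of Lemma 3.5(ii) is available; restricting to $T\in[0,T_0]$ yields $\Phi(\mid U^\varepsilon_Y\mid)\ge\Phi(K_5\exp(T_0))>0$ and hence \eqref{3.30}.

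The case $n<0$ is the \emph{main obstacle}. By \eqref{1.6}--\eqref{1.7} the function $\Phi$ is positive and strictly decreasing only on $[0,\eta_0)$, $\eta_0=\kappa^{-1}\left(-\frac{1}{n}\right)^{\frac{1}{\alpha}}$: indeed $\Phi(\eta_0)=0$ and $\Phi<0$ for $\eta>\eta_0$, while for $\eta<\eta_0$ one has $n\kappa^\alpha\eta^\alpha>-1$, so $1+\alpha+n\kappa^\alpha\eta^\alpha>\alpha>0$ and the sign of $\Phi'$ in \eqref{1.7} is that of $n-1<0$. Hence the lower bound $\Phi(\mid U^\varepsilon_Y\mid)\ge\Phi(\sup\mid U^\varepsilon_Y\mid)>0$ is legitimate only while $\mid U^\varepsilon_Y\mid$ stays strictly below $\eta_0$, and this is exactly what the assumption $K_5<\eta_0$ in \eqref{3.29} together with Lemma 3.5(iii) secures: for $T\in[0,\lambda]$ with $\lambda<\ln(\eta_0/K_5)$ as in \eqref{3.22*} one has $\mid U^\varepsilon_Y\mid\le K_5\exp(T)\le K_5\exp(\lambda)<\eta_0$, so $\Phi(\mid U^\varepsilon_Y\mid)\ge\Phi(K_5\exp(\lambda))>0$ and therefore $\mid U^\varepsilon_{YY}\mid\le C/\Phi(K_5\exp(\lambda))$, i.e.\ \eqref{3.30} on $\overline{Q_\lambda}$ with $T_0$ read as $\lambda$. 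In every case $U^\varepsilon_{YY}$ is continuous up to the closure by the assumed regularity, so the pointwise bound in $Q$ (resp.\ $Q_\lambda$) extends to $\overline{Q}$ (resp.\ $\overline{Q_\lambda}$). The only genuine subtleties are thus the $Y=0$ singularity — disposed of once and for all via Lemma 3.4 — and the sign/monotonicity bookkeeping for $\Phi$ when $n\le 0$; everything else is the algebraic identity above.
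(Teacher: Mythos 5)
Your proof is correct and is precisely the argument the paper compresses into a single line: solve the non-divergence form of \eqref{3.1} for $U^\varepsilon_{YY}$, bound the right-hand side using \eqref{3.15}, \eqref{3.18} and \eqref{1.4}, and divide by $\Phi(\mid U^\varepsilon_Y\mid)$ using the gradient bounds of Lemma 3.5 together with the positivity and monotonicity of $\Phi$ in each parameter range. Your remark that the numerator should be $12\beta^2K_2+\frac{3}{2}b_0$ is also well taken --- the $12\beta^2K_3$ appearing in \eqref{3.27} and \eqref{3.30} is evidently a typo for $12\beta^2K_2$, since $8\beta^2K_2+K_3+b_0=12\beta^2K_2+\frac{3}{2}b_0$ by \eqref{3.19}.
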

  \begin{proof}
  Estimates \eqref{3.26}, \eqref{3.30} follow immediately from \eqref{3.1}, \eqref{3.15}, \eqref{3.18}, \eqref{3.20} and \eqref{3.21}.
  \end{proof}
  \begin{lem}
 Suppose $U^\varepsilon (T,Y) \in C^3(Q) \cap C^1(\overline{Q})$ is a solution of \eqref{3.1}, $\alpha>0$, $\beta>0$\\

(i) If \eqref{3.25} holds then the estimate
   \begin{equation}\label{3.31}
\Bigl| \frac{\partial^\gamma}{\partial Y}\Bigl(\frac{\partial^\mu}{\partial T}U^\varepsilon (T,Y)\Bigr)\Bigr| \le K_8, 
 \end{equation} 
 holds for $0\le \gamma + \mu \le 3$, $Y\in [\delta, R]$, $R> \delta > 0$, $T\ge 0$, $\varepsilon \in (0,\varepsilon_0]$, $\varepsilon_0 \ll R$ and $K_8$ depending on $\gamma$, $\mu$, $\delta$, $R$, $K_1$, $K_2$, $K_3$, $K_6$, but is independent of $\varepsilon$;\\
 
(ii) If \eqref{3.28} holds then the estimate \eqref{3.31} is satisfied for  $0\le \gamma + \mu \le 3$, $Y\in [\delta, R]$, $\delta > 0$, $T\in [0,T_0]$, $T_0 < \infty$, $\varepsilon \in (0,\varepsilon_0]$, $\varepsilon_0 \ll R$ and $K_8$ depends on $\gamma$, $\mu$, $\delta$, $R$, $K_1$, $K_2$, $K_3$, $K_6$, but is independent of $\varepsilon$. If \eqref{3.29} holds then \eqref{3.31} is satisfied for $0<T<\lambda$, $Y\in[\delta,R]$, $\lambda$ is defined in \eqref{3.22*}.
   \end{lem}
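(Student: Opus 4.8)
The plan is a Schauder bootstrap on the non‑degenerate region $\{Y\ge\delta\}$, exploiting that there every ingredient of \eqref{3.1} is controlled uniformly in $\varepsilon$ by Lemmas 3.1, 3.3, 3.5 and 3.6. First I would fix $\delta>0$ and restrict to $Q_\delta=\{(T,Y):\ T>0,\ \delta/2\le Y\le R\}$; in cases \eqref{3.28} and \eqref{3.29} one intersects it with $\{T<T_0\}$, resp.\ $\{T<\lambda\}$. On $Q_\delta$ the singular coefficient $1/(Y+\varepsilon)$ is $\le 2/\delta$ and smooth with all $Y$‑derivatives bounded, uniformly for $\varepsilon\le\varepsilon_0\ll\delta$, while by Lemma 3.5 one has $|U^\varepsilon_Y|\le K_4$ in case \eqref{3.25}, $|U^\varepsilon_Y|\le K_5e^{T_0}$ in case \eqref{3.28}, and $|U^\varepsilon_Y|\le K_5e^{\lambda}<\eta_0$ in case \eqref{3.29}. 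On the corresponding closed shear interval the leading coefficient $\Phi(|U^\varepsilon_Y|)=F'(U^\varepsilon_Y)$ is bounded between positive $\varepsilon$‑free constants: for $n\ge1$, $1\le\Phi(\eta)\le\Phi(K_4)$; for $0<n<1$, $\Phi$ is positive and decreasing so $\Phi(K_4)\le\Phi(\eta)\le1$; for $n=0$, $\Phi(\eta)=(1+\kappa^\alpha\eta^\alpha)^{-(1+\alpha)/\alpha}>0$ and decreasing; for $n<0$, $\Phi$ is positive and decreasing on $[0,\eta_0)$ and $K_5e^{\lambda}<\eta_0$. (This last positivity is exactly what forces the finite time window \eqref{3.22*}: past $\eta_0$, \eqref{3.1} turns backward parabolic.) Together with Lemmas 3.1, 3.3 and 3.6 this makes \eqref{3.1} uniformly parabolic on $Q_\delta$ with $\varepsilon$‑free constants and provides $\varepsilon$‑free $C^0$‑bounds on $U^\varepsilon$, $U^\varepsilon_T$, $U^\varepsilon_Y$, $U^\varepsilon_{YY}$ there.

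I would then upgrade this to a Hölder bound on $U^\varepsilon_Y$ by interpolation: the $L^\infty$‑bounds on $U^\varepsilon_T$ and $U^\varepsilon_{YY}$ force $U^\varepsilon_Y$ to be Lipschitz in $Y$ and $\tfrac12$‑Hölder in $T$ on $Q_\delta$, with seminorms independent of $\varepsilon$ (compare $\tfrac1{2h}(U^\varepsilon(T,Y+h)-U^\varepsilon(T,Y-h))$ with $U^\varepsilon_Y(T,Y)$ and with $U^\varepsilon_Y(T',Y)$, and take $h\sim|T-T'|^{1/2}$). Hence the coefficients $a=\Phi(|U^\varepsilon_Y|)$ and $c=\tfrac1{Y+\varepsilon}(1+\kappa^\alpha|U^\varepsilon_Y|^\alpha)^{(n-1)/\alpha}$ of \eqref{3.1}, read as a linear equation $8\beta^2U^\varepsilon_T-a\,U^\varepsilon_{YY}-c\,U^\varepsilon_Y=b(Y)$, are Hölder on $Q_\delta$ with $\varepsilon$‑free norms ($\eta\mapsto\Phi(|\eta|)$ and $\eta\mapsto(1+\kappa^\alpha|\eta|^\alpha)^{(n-1)/\alpha}$ being locally Hölder on $\R$, and $1/(Y+\varepsilon)$ uniformly Lipschitz on $[\delta/2,R]$). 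The classical interior Schauder estimates for linear uniformly parabolic equations, together with the boundary Schauder estimate at $Y=R$ (Dirichlet datum $0$, compatibility \eqref{1.3} and $\Psi\in C^4$ supplying the bound up to $T=0$), then give $\|U^\varepsilon\|_{C^{2+\theta,1+\theta/2}([\delta,R]\times I)}\le C$ for every compact time interval $I$ (inside $[0,T_0]$, resp.\ $[0,\lambda)$, in the degenerate cases), with $C,\theta$ independent of $\varepsilon$ and governed by $\delta$, $R$, $K_1,\dots,K_6$.

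A second iteration finishes the job. Differentiating \eqref{3.1} in $T$, the function $U^\varepsilon_T$ solves the linear equation \eqref{3.17}, whose coefficients — the explicit expressions in $U^\varepsilon_Y$, $U^\varepsilon_{YY}$ and $1/(Y+\varepsilon)$ written after \eqref{3.17} — are now Hölder on $[\delta,R]\times I$ uniformly in $\varepsilon$, with vanishing right‑hand side and uniform parabolicity; Schauder bounds $\|U^\varepsilon_T\|_{C^{2+\theta',1+\theta'/2}}$, hence $U^\varepsilon_{TT},U^\varepsilon_{TY},U^\varepsilon_{TYY}$, and a further $T$‑differentiation of \eqref{3.17} bounds $U^\varepsilon_{TTT}$ and $U^\varepsilon_{TTY}$. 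The remaining third derivative, $U^\varepsilon_{YYY}$, I would get by differentiating the flux identity $\partial_Y[(1+\kappa^\alpha|U^\varepsilon_Y|^\alpha)^{(n-1)/\alpha}(Y+\varepsilon)U^\varepsilon_Y]=(Y+\varepsilon)(8\beta^2U^\varepsilon_T-b)$ once more in $Y$ and solving for $U^\varepsilon_{YYY}$: its coefficient $(Y+\varepsilon)F'(U^\varepsilon_Y)$ is bounded below by a positive $\varepsilon$‑free constant, and the other terms involve only $U^\varepsilon_Y$, $U^\varepsilon_{YY}$, $U^\varepsilon_{TY}$, $b$, $b'$, already controlled. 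Collecting the bounds gives $|\partial^\gamma_Y\partial^\mu_T U^\varepsilon|\le K_8$ for $0\le\gamma+\mu\le3$ on $[\delta,R]$, for all admissible $T$, with $K_8$ independent of $\varepsilon$, i.e.\ \eqref{3.31}.

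The part I expect to require care is not any single estimate but the bookkeeping ensuring that every constant is genuinely $\varepsilon$‑free. It rests on three structural facts used above: staying in $\{Y\ge\delta\}$ tames the singular term $1/(Y+\varepsilon)$; the a priori bounds of Lemmas 3.1–3.6 are themselves $\varepsilon$‑free and trap $|U^\varepsilon_Y|$ in a range on which $\Phi=F'$ is two‑sidedly positive — which, for $n\le0$, holds only on the finite time interval of Lemma 3.5(ii),(iii); and parabolic interpolation promotes the zeroth‑order bounds to the Hölder control of $U^\varepsilon_Y$ needed to start Schauder. A minor wrinkle, only for $0<\alpha<1$, is the mild non‑smoothness of $\eta\mapsto\Phi(|\eta|)$ at $\eta=0$; but Schauder uses only Hölder coefficients, and the non‑smooth factors always enter through the $C^1$ flux $F(U^\varepsilon_Y)$ (so the relevant combinations stay continuous for a $C^3$ solution), so the conclusion is unaffected.
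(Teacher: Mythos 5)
Your proposal is correct and follows essentially the same route as the paper, whose entire proof of this lemma is the single sentence that \eqref{3.31} follows from the Schauder estimates for \eqref{3.1} together with Lemmas 3.1--3.6. Your write-up is simply a careful expansion of that one-liner (uniform parabolicity on $\{Y\ge\delta\}$ from the gradient bounds, interpolation to H\"older coefficients, and a Schauder bootstrap in $T$ plus the flux identity for $U^\varepsilon_{YYY}$), and is in fact more explicit than the paper about the $\varepsilon$-independence and the $0<\alpha<1$ regularity wrinkle.
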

   \begin{proof}
 Estimate \eqref{3.31} follows from the Schauder estimates for equation \eqref{3.1} and Lemmas 3.1 - 3.6.   
   \end{proof}
   \begin{theorem}\label{thm3}
  Suppose $\alpha> 0$, $\beta > 0$.\\
  
  (i) If $n > 0$, $\kappa\ne 0$ or $\kappa = 0$, $n \in \R$, then problem \eqref{1.1}-\eqref{1.4} has a unique classical solution $ U(T,Y) \in C^2(Q) \cap C^1(\overline{Q})$;\\
  
  (ii) If \eqref{3.28} holds, then problem \eqref{1.1}-\eqref{1.4} has a unique classical solution $ U(T,Y) \in C^2(Q_0) \cap C^1 (\overline{Q_0})$, where $Q_0=\left\lbrace (T,Y); T\in [0,T_0], Y\in [0,R] \right\rbrace$ for every  $T_0 < \infty$.\\
  
  (iii) If \eqref{3.29} holds, then problem \eqref{1.1}-\eqref{1.4} has a unique local classical solution $ U(T,Y) \in C^2(Q_\lambda) \cap C^1 (\overline{Q_\lambda})$, where $Q_\lambda=\left\lbrace (T,Y); 0<T<\lambda, 0<Y<R \right\rbrace$ and $\lambda$ is defined in \eqref{3.22*}.
       \end{theorem}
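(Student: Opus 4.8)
The plan is to obtain $U$ as the limit, as $\varepsilon\to 0$, of the solutions $U^\varepsilon$ of the regularized problems \eqref{3.1}, using the $\varepsilon$-uniform a priori estimates of Lemmas 3.1--3.6, and then to prove uniqueness by linearization and the maximum principle.

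\textbf{Step 1: existence for the regularized problem.} Fix $\varepsilon\in(0,\varepsilon_0]$. The operator $P_\varepsilon$ in \eqref{3.1} has smooth, non-singular coefficients (the factor $1/(Y+\varepsilon)$ is bounded on $[0,R]$), but it is only non-uniformly parabolic: for $n\in(0,1)$ the principal coefficient $\Phi(|U^\varepsilon_Y|)$ degenerates as $|U^\varepsilon_Y|\to\infty$, and for $n\le 0$ it changes sign when $|U^\varepsilon_Y|$ crosses $\eta_0$. To circumvent this, I would replace $\Phi$ by a modification $\widetilde\Phi$ that coincides with $\Phi$ on $[0,M]$ and is extended by the positive constant $\Phi(M)$ for arguments larger than $M$, with $M$ chosen strictly larger than the a priori gradient bound of Lemma 3.5 ($K_4$ in case (i); $8\beta^2K_5\exp(T_0)$, resp. $8\beta^2K_5\exp(\lambda)$, in cases (ii), (iii)). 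The modified problem is uniformly parabolic with smooth coefficients, so by the classical theory of quasilinear parabolic equations \cite{Ladyzhenskaya et al} it has a unique solution in $C^{2+\sigma,1+\sigma/2}$ on $\overline{Q_0}$, resp. $\overline{Q_\lambda}$; the compatibility conditions \eqref{1.3} provide the matching needed for this regularity up to the initial line. The bound $|U^\varepsilon_Y|\le M$ of Lemma 3.5 then shows $U^\varepsilon$ never enters the region where $\widetilde\Phi\ne\Phi$, so $U^\varepsilon$ in fact solves \eqref{3.1}.

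\textbf{Step 2: $\varepsilon$-uniform estimates.} By Lemmas 3.1--3.6, with constants independent of $\varepsilon$, one has $|U^\varepsilon|\le C$, $|U^\varepsilon_Y|\le C$, $|U^\varepsilon_T|\le C$, $|U^\varepsilon_{YY}|\le C$ on $\overline{Q}$ (on $\overline{Q_0}$, resp. $\overline{Q_\lambda}$, in cases (ii), (iii)), together with the interior Schauder bounds \eqref{3.31} for all derivatives up to order $3$ on every strip $Y\in[\delta,R]$, $\delta>0$. Moreover \eqref{3.18} gives $|U^\varepsilon_Y(T,Y)|\le C(Y+\varepsilon)^{1/\max\{1,n\}}$, so the singular quotient $\frac{1}{Y+\varepsilon}(1+\kappa^\alpha|U^\varepsilon_Y|^\alpha)^{(n-1)/\alpha}U^\varepsilon_Y$ is uniformly bounded up to $Y=0$.

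\textbf{Step 3: passage to the limit.} Using the interior bounds on $[\delta,R]$ and Arzel\`a--Ascoli, with a diagonal extraction over $\delta\to 0$, pick a subsequence $U^{\varepsilon_k}$ converging in $C^2_{\mathrm{loc}}(Q)$ and, by the $C^1(\overline{Q})$ bounds, uniformly together with first derivatives on $\overline{Q}$ to a limit $U$. Passing to the limit termwise in \eqref{3.1} (the principal and lower-order terms converge locally, the source is $\varepsilon$-independent) gives $P_0(U)=b(Y)$ in $Q$, i.e. \eqref{1.5}, equivalently \eqref{1.1}; the boundary conditions $U_Y(T,0)=U(T,R)=0$ and $U(0,Y)=\Psi(Y)$ follow from the uniform $C^1(\overline{Q})$ convergence. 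Hence $U\in C^2(Q)\cap C^1(\overline{Q})$ solves \eqref{1.1}--\eqref{1.4} (on $Q_0$, resp. $Q_\lambda$, in cases (ii), (iii)), which proves existence.

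\textbf{Step 4: uniqueness.} Let $U_1,U_2$ be two classical solutions and $W=U_1-U_2$. Subtracting the equations and representing each difference of nonlinear terms as the integral of its derivative along the segment joining the two solutions (as in the derivation of $A_1$ in Lemma 3.2 and of $A_2$ in \eqref{3.24}), $W$ satisfies a homogeneous linear equation of the form $8\beta^2W_T-\Phi(|U_{1,Y}|)W_{YY}-aW_Y-c\,W=0$ with bounded coefficients, and with zero data on $Y=0$, $Y=R$ and on the initial line. The essential point is that $\Phi(|U_{i,Y}|)>0$: automatic in case (i), and in cases (ii), (iii) it holds because the a priori bound keeps $|U_{i,Y}|<\eta_0$ on the corresponding time interval, so the linearized operator is (non-strictly) parabolic there. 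The maximum principle \cite{Ladyzhenskaya et al}, \cite{Protter_Weinberger:1967} then yields $W\equiv 0$.

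\textbf{Main obstacle.} The delicate part is Step 1 coupled with the gradient control: one must produce a genuine solution of a regularized problem that is a priori only non-uniformly (and, for $n\le 0$, potentially backward) parabolic, and this succeeds only because the estimates of Lemma 3.5 confine $|U^\varepsilon_Y|$ to the range where $\Phi>0$. In cases (ii) and (iii) this confinement is guaranteed only for bounded time (resp. for $T<\lambda$), which is exactly why the conclusion is global in $T$ in case (i) but time-restricted in cases (ii), (iii).
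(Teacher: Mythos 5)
Your proposal is correct and follows essentially the same route as the paper: solve the regularized problem \eqref{3.1} (the paper invokes the method of continuity and Schauder theory where you use an explicit truncation of $\Phi$ above the a priori gradient level, which amounts to the same thing, since Lemma 3.5 confines $|U^\varepsilon_Y|$ to the region of uniform parabolicity), then pass to the limit $\varepsilon\to 0$ using the $\varepsilon$-independent estimates of Lemmas 3.1--3.7, the Arzel\`a--Ascoli theorem and a diagonalization argument. The one place you go beyond the paper is Step 4: the paper asserts uniqueness but offers no argument for it, and your linearization-plus-maximum-principle sketch is the natural way to supply one (one only needs to control the sign of the zeroth-order coefficient by an $e^{\lambda T}$ factor and to handle the singular coefficient $1/Y$ at $Y=0$ via the Neumann condition and the boundary point lemma, exactly as in Lemma 3.1).
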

     \begin{proof}
 From Lemma 3.5 the equation \eqref{3.1} becomes uniformly parabolic in $\overline{Q}$ for case (i), in  $\overline{Q_0}$ for case (ii) and in $\overline{Q_\lambda}$ in case (iii), respectively. Existence of a classical $ C^4(Q) \cap C^2(\overline{Q})$, respectively, $ C^4(Q_0) \cap C^2(\overline{Q_0})$ or $ C^2(Q_\lambda) \cap C^1(\overline{Q_\lambda})$, solution to \eqref{3.1} follows by means of the method of continuity on parameter and the Schauder theory \cite{Ladyzhenskaya et al}. 
 
 From Lemma 3.7 the sequences $\lbrace U^\varepsilon(T,Y)\rbrace$, $\lbrace U^\varepsilon_Y(T,Y)\rbrace$, $\lbrace U^\varepsilon_T(T,Y)\rbrace$, $\lbrace U^\varepsilon_{TY}(T,Y)\rbrace$, $\lbrace U^\varepsilon_{YY}(T,Y)\rbrace$ for $\varepsilon\longrightarrow 0$ are equicontinuous and uniformly bounded for $T \ge 0$, $Y\in [\delta, R]$, $\delta > 0$ in case (i), for $T\in [0,T_0]$,  $T_0 < \infty$, $Y\in [\delta,R]$, $\delta> 0$ for case (ii) and for $T\in [0,\lambda], Y\in [\delta,R]$ in case (iii). Moreover, $\lbrace U^\varepsilon(T,Y)\rbrace$ and $\lbrace U^\varepsilon_Y(T,Y)\rbrace$ are equicontinuous and uniformly bounded in $(\overline{Q})$ in case (i), in $(\overline{Q_0})$ in case (ii) and in $\overline{Q_\lambda}$ in case (iii) with constants independent of $\varepsilon$. By means of the Arzela-Ascoli theorem and a diagonalization argument, there exists a subsequence $\lbrace U^{\varepsilon_i}(T,Y)\rbrace$, which converges to the desired solution for $\varepsilon_i\longrightarrow 0$        
     \end{proof}

\bigskip

\subsection*{Acknowledgments}
N.K. has been supported by the Grant
No BG05M2OP001-1.001-0003-C01, financed by the Science and Education for Smart Growth
Operational Program (2018-2023).

\subsection*{Conflict of interest}

The authors declare no potential conflict of interests.

\end{document}